\documentclass[smallextended,final]{svjour3}

\usepackage[english]{babel}
\usepackage[T1]{fontenc}



\usepackage{amssymb,amsmath,amscd}
\usepackage{graphicx}
\usepackage[colorlinks=true, allcolors=blue]{hyperref}
\usepackage{mathtools}
\usepackage{algorithm}
\usepackage[noend]{algpseudocode}
\usepackage{enumerate}
\usepackage[all]{xy}
\usepackage[version=3]{mhchem}
\usepackage{blkarray}


\newcommand{\R}{\mathbb{R}}
\newcommand{\Zint}{\mathbb{Z}}

\newcommand{\X}{\mathbb{X}}
\newcommand{\coker}[0]{\mathrm{coker}\,}
\newcommand{\im}[0]{\mathrm{im}\,}
\newcommand{\rank}[0]{\mathrm{rank}\,}

\newcommand{\low}{L}
\newcommand{\Cech}{\mathrm{\check{C}ech}}
\spdefaulttheorem{cond}{Condition}{\bf}{\rm}
\spdefaulttheorem{fact}{Fact}{\bf}{\rm}
\spdefaulttheorem{notation}{Notation}{\bf}{\rm}
\spdefaulttheorem{theoremext}{Theorem}{\bf}{\it}
\spdefaulttheorem{conj}{Conjecture}{\bf}{\it}

\begin{document}

\title{Field choice problem in persistent homology}
\author{Ippei Obayashi \and Michio Yoshiwaki}
\institute{
  Ippei Obayashi
  \at Center for Advanced Intelligence Project, RIKEN,
  Nihonbashi 1-chome Mitsui Building, 15th Floor,
  1-4-1 Nihonbashi, Chuo-ku, Tokyo 103-0027, Japan \\ \email{ippei.obayashi@riken.jp},
  ORCID: \url{https://orcid.org/0000-0002-7207-7280}
  \and
  Michio Yoshiwaki
  \at Center for Advanced Intelligence Project, RIKEN,
  Nihonbashi 1-chome Mitsui Building, 15th Floor,
  1-4-1 Nihonbashi, Chuo-ku, Tokyo 103-0027, Japan
  \at
  Osaka City University Advanced Mathematical Institute,
  3-3-138 Sugimoto, Sumiyoshi-ku, Osaka 558-8585, Japan
}

\date{Received: date / Accepted: date}

\maketitle

\begin{acknowledgement}
  This work was partially supported by 
  JSPS (Japan Society for the Promotion of Science)
  KAKENHI Grant Numbers JP 16K17638 and JP 19H00834, 
  JST (Japan Science and Technology Agency)
  CREST Grant Number JPMJCR15D3,
  JST PRESTO Grant Number JPMJPR1923,
  JST-Mirai Program Grant Number JPMJMI18G3,
  and Osaka City University Advanced Mathematical Institute (MEXT Joint Usage/Research Center on Mathematics and Theoretical Physics JPMXP0619217849).
  The authors are grateful to participants at the
  2nd JST math workshop on open problems
  for helpful discussions.
\end{acknowledgement}

\begin{abstract}
This paper tackles the problem of coefficient field choice in persistent homology.
When we compute a persistence diagram, we need to select a coefficient field before computation.
We should understand the dependency of the diagram on the coefficient field to facilitate 
computation and interpretation of the diagram.
We clarify that the dependency is strongly related to the torsion of $\Zint$ relative homology
in the filtration. We show the sufficient and necessary conditions of
the independence of coefficient field choice.  An efficient
algorithm is proposed to verify the independence. In a numerical experiment with the algorithm,
a persistence diagram rarely changes even when the coefficient field changes if we consider a
filtration in $\R^3$.
The experiment suggests that, in practical terms, changes in the field coefficient will not change persistence diagrams when the data are in $\R^3$.

\keywords{Topological data analysis, Persistent homology, Algorithm, Algebraic topology}
\subclass{55U99 \and 55N35}
\end{abstract}

\section{Introduction}
Topological data analysis (TDA) \cite{eh,carlsson} is, as the name suggests, the application of
topology to data analysis. Persistent homology \cite{elz,zc} is one of the most
important tools for TDA.
In persistent homology, by encoding information on length scales in filtrations,
we can capture characteristic geometric features with multiple length scales.
By using filtrations, persistent homology is also robust to noise \cite{cohen2007stability,chazal2009proximity}.
Homology itself is translation and rotation invariant, and so
persistent homology is similarly invariant. These properties are
suitable for the analysis of shapes of data, and
persistent homology is applied in various practical data analysis contexts in domains such as
biology~\cite{virus}, image processing~\cite{Hu2019},
and materials science~\cite{Hiraoka28062016,granular,PhysRevE.95.012504,Kimura2018}.

To describe our problem, we first define persistent homology. Persistent homology
is defined on a filtration, an increasing sequence of topological spaces.
We consider the following filtration:
\begin{align*}
  \X & = (X_t)_{t \in T}, \\
  X_t & \subset X_s \mbox{ if $t \leq s$}, \\
  X_0 &= \emptyset,
\end{align*}
where $T$ is $\{0, 1, \cdots, N\}$ or $\R_+$. 
The $q$th \emph{persistent homology} $H_q(\X; \Bbbk)$
with a coefficient ring $\Bbbk$ is defined as follows:
\begin{align*}
  H_q(\X; \Bbbk) &\mbox{ is a pair of } \\
                 & \{H_q(X_t; \Bbbk)\}_{t \in T}, \\
                 & \{\phi_s^t:H_q(X_s;\Bbbk) \to H_q(X_t; \Bbbk) \}_{s \leq t},
\end{align*}
where $\phi_s^t:H_q(X_s;\Bbbk) \to H_q(X_t; \Bbbk)$ is the homology map
induced by the inclusion map $X_s \hookrightarrow X_t$.
In standard homology theory, we use $\Zint$ as a coefficient ring
since the universal coefficient theorem ensures
that $\Zint$-homology provides the most information about homology.
However, in the theory of persistent homology a field is used instead of $\Zint$ since
the interval decomposition described below is crucial for analysis of persistent homology
and the decomposition is guaranteed \emph{only when $\Bbbk$ is a field}.
Indeed, the structural theory of persistent homology ensures the existence and 
uniqueness of the following decomposition of $H_q(\X; \Bbbk)$ called 
the \emph{interval decomposition}
if $\Bbbk$ is a field:
\begin{theorem}
For a filtration $\X  = (X_t)_{t \in T}$ of topology spaces $X_t$ where $T=\{0, 1, \cdots, N\}$, the $q$-th persistent homology $H_q(\X; \Bbbk)$ has the following unique decomposition.  
\begin{align*}
  H_q(\X; \Bbbk) &= \bigoplus_{i=1}^L I(b_m, d_m), \\
  I(b, d) &= \{\varphi_s^t: U_s \to U_t\}_{s \leq t}, \\
  U_s &= \left\{
  \begin{array}{ll}
    \Bbbk & \mbox{ if $b \leq s \leq d$}, \\
    0 & \mbox{ otherwise},
  \end{array}\right. \\
  \varphi_s^t &= \left\{
  \begin{array}{ll}
    \mathrm{id} & \mbox{ if $b \leq s \leq t \leq d$}, \\
    0 & \mbox{ otherwise},
  \end{array}\right. 
\end{align*}
where $0 < b_m \leq d_m \leq \infty$.
This $I(b, d)$ is called an \emph{interval indecomposable}.
\end{theorem}
This theorem depends on the fact that $\Bbbk[z]$, the polynomial ring
with a field coefficient, is a PID~\cite{zc}, and so this theorem does not hold
for $\Bbbk = \Zint$.

When this interval decomposition is given, we define the $q$th
\emph{persistence diagram} (PD) $D_q(\X; \Bbbk)$ as a multiset of pairs of
endpoints of the intervals. That is, $D_q(\X; \Bbbk) = \{(b_m, d_m)\}_{i=1}^L$.
Each pair is called a \emph{birth-death pair}. $b_m$ and $d_m$ are called \emph{birth time} and \emph{death time} of the birth-death pair $(b_m, d_m)$, and
$d_m - b_m$ is called a \emph{lifetime} of the pair. Since a birth-death pair
with a long lifetime corresponds to a ``stable'' homological structure
in the filtration, we can use lifetimes to compare the significance of birth-death pairs.

Normally, we choose $\Bbbk$ as one of $\R, \mathbb{Q}$, and $\Zint_p = \Zint/p\Zint$ for a prime $p$.
$\Zint_2$ is most often used since it is amenable to a fast algorithm
and an intuitive interpretation.
Here we face the problem of the choice of $\Bbbk$.
If any $\Bbbk$ gives the same PD, 
there is no problem. However, this is not practical, because the dimensions of homology vector spaces
for the same topological space are different when the $\Zint$-homology group of the space has non-zero
torsion. If a Klein bottle appears in a filtration, the PDs 
for $\Bbbk = \Zint_2$ and $\Bbbk = \R$ are clearly different.
For analysis of persistent homology with
torsions, Boissonnat and Maria
\cite{BJDMC} proposed
an efficient algorithm
to compute PD for multiple coefficient fields
by utilizing the Chinese remainder theorem.
Then, the following questions naturally arise.
\begin{itemize}
    \item What condition ensures the independence of the choice of the field $\Bbbk$?
    \item If there is such a condition, is there an efficient algorithm to check it?
    \item How often does $D_q(\X; \Bbbk)$ change as the field $\Bbbk$ changes?
    \item When $D_q(\X; \Bbbk)$ changes depending on the choice of $\Bbbk$, how does $D_q(\X; \Bbbk)$ change?
\end{itemize}
In this paper, we offer complete
answers for the first and second questions, and partial answers
for the third and fourth questions. 

\subsection{Results}
To describe the results of the paper, we give some assumptions.
We always assume the finiteness of the filtration.
A filtration is \emph{finite}
if $X = \cup_t X_t$ is a finite simplicial/cell/cubical complex.
This condition ensures the existence and uniqueness of the interval
decomposition~\cite{zc}. This assumption is reasonable since
an infinite filtration cannot be represented on a computer and
we cannot use such a filtration in practical applications.
To consider field choice problems, we always restrict the
candidates of a field to $\mathbb{C}, \R, \mathbb{Q},$ and
$\Zint_p$ for a prime $p$. 

\begin{question}\label{q:cond}
  When is $D_q(\X; \Bbbk)$ independent of the choice of the field $\Bbbk$?
\end{question}

To consider Question~\ref{q:cond}, 
it is desirable for the following proposition to hold since
$H_q(X_t; \Zint)$ is often free for every $t$ in practical cases.
\begin{conj}[Incorrect!] 
  If $H_q(X_t; \Zint)$ is free for every $t \in T$, then
  the persistent homology $H_q(\X; \Bbbk)$ has the same decomposition for any field
  $\Bbbk$.
\end{conj}
However, we have a counterexample of this proposition (Fig.~\ref{fig:mobius}).
\begin{example} \label{exm:mobius}
Let $M$ be a M\"obius strip
and $\partial M$ be its boundary. Both $H_1(\partial M; \Zint)$ and $H_1(M; \Zint)$ are
isomorphic to $\Zint$, and the homomorphism
\begin{align*}
  H_1(\partial M; \Zint) \to H_1(M; \Zint)
\end{align*}
is isomorphic to
\begin{align*}
  n \in \Zint \mapsto 2n \in \Zint
\end{align*}
and the interval decomposition on $\R$ and  $\Zint_2$ gives
the different decomposition as follows:
\begin{align*}
  &\X : X_0 = \emptyset \subset X_1 = \partial M \subset X_2 = M, \\
  &H_1(\X; \Zint_2) = I(1, 2) \oplus I(2, \infty), \\
  &H_1(\X; \R) = I(1, \infty). 
\end{align*}
In this example, both $H_1(\partial M; \Zint)$ and $H_1(M; \Zint)$
are free, but $H_1(M, \partial M; \Zint) \simeq \Zint_2$ and this is not free.
This fact is key to the different diagrams.
Section~\ref{sec:example} shows some other examples.

\begin{figure}[hbtp]
  \centering
  \includegraphics[width=0.7\hsize]{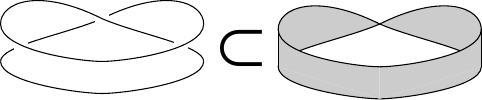}
  \caption{M\"obius strip and its boundary}
  \label{fig:mobius}
\end{figure}
\end{example}

We present the following theorem.
\begin{theorem}\label{thm:allfree}
  $D_q(\X; \Bbbk)$ is independent of the choice of $\Bbbk$ if
  $H_q(X_n, X_m; \Zint)$ is free for any $0\leq m < n \in T$ and
  $H_{q-1}(X_n; \Zint)$ is free for any $n \in T$. 
\end{theorem}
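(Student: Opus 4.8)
The plan is to reduce the claim to the field-independence of the ranks of the internal maps of the persistence module $H_q(\X;\Bbbk)$, and then to pin down those ranks using the long exact sequence of a pair together with the universal coefficient theorem (UCT).

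First, I would invoke the ``fundamental lemma of persistent homology'': for a finite filtration the interval decomposition exists and is unique, and the multiplicity of every interval indecomposable in it — hence the whole diagram $D_q(\X;\Bbbk)$ — is recovered from the collection of ranks $\rank_\Bbbk\phi_s^t$, $s\le t$, by an inclusion--exclusion formula. (When $T=\R_+$, finiteness of the filtration means the complexes $X_t$ take only finitely many values with a fixed finite set of critical values independent of $\Bbbk$, so this case reduces to $T=\{0,\dots,N\}$.) It therefore suffices to show that $\rank_\Bbbk\phi_s^t$ is independent of the field $\Bbbk$ for all $s\le t$; the diagonal case $s=t$ is immediate once we know the groups $H_q(X_t;\Zint)$ are free (established below), so fix $m<n$ in $T$.

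Write $f_{m,n}\colon H_q(X_m;\Zint)\to H_q(X_n;\Zint)$ for the map induced by the inclusion. Because $H_{q-1}(X_m;\Zint)$ and $H_{q-1}(X_n;\Zint)$ are free, the $\Tor$ terms in the UCT vanish, and by naturality of the UCT with respect to $X_m\hookrightarrow X_n$ the map $\phi_m^n$ over $\Bbbk$ is identified with $f_{m,n}\otimes_\Zint\Bbbk$, so $\rank_\Bbbk\phi_m^n=\rank_\Bbbk(f_{m,n}\otimes\Bbbk)$. Next, taking $m=0$ in the hypothesis and using $X_0=\emptyset$, so that $H_q(X_n,X_0;\Zint)=H_q(X_n;\Zint)$, shows that $H_q(X_n;\Zint)$ is itself free for every $n$. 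Moreover, from the long exact sequence of $(X_n,X_m)$ the cokernel of $f_{m,n}$ is isomorphic to the image of $H_q(X_n;\Zint)\to H_q(X_n,X_m;\Zint)$, a subgroup of the free group $H_q(X_n,X_m;\Zint)$, hence free. Putting $J=\im f_{m,n}\subseteq H_q(X_n;\Zint)$, the sequence $0\to J\to H_q(X_n;\Zint)\to\coker f_{m,n}\to 0$ has free right-hand term, so it stays exact after $\otimes_\Zint\Bbbk$, and since the surjection $H_q(X_m;\Zint)\twoheadrightarrow J$ also stays surjective after tensoring,
\begin{align*}
  \rank_\Bbbk(f_{m,n}\otimes\Bbbk)
    &= \dim_\Bbbk\!\left(J\otimes\Bbbk\right) \\
    &= \dim_\Bbbk\!\left(H_q(X_n;\Zint)\otimes\Bbbk\right) - \dim_\Bbbk\!\left(\coker f_{m,n}\otimes\Bbbk\right) \\
    &= \rank_\Zint H_q(X_n;\Zint) - \rank_\Zint\coker f_{m,n},
\end{align*}
where the last equality uses that $H_q(X_n;\Zint)$ and $\coker f_{m,n}$ are free. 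The right-hand side does not involve $\Bbbk$, which completes the argument.

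The routine bookkeeping — the precise inclusion--exclusion formula in the fundamental lemma, and the passage from $T=\R_+$ to the discrete case — I will not spell out. The step that needs the most care, and which I expect to be the crux, is the identification of $\phi_m^n$ over $\Bbbk$ with $f_{m,n}\otimes\Bbbk$: it rests on the two universal coefficient sequences for $X_m$ and $X_n$ forming a commutative ladder under the inclusion and on the hypothesis on $H_{q-1}$ killing both $\Tor$ columns. The second thing to notice — easy once seen — is that the relative hypothesis already forces freeness of the \emph{absolute} groups $H_q(X_n;\Zint)$ (take $m=0$), which is what makes the cokernel computation above legitimate.
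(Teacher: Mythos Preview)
Your proposal is correct and follows essentially the same route as the paper: reduce to field-independence of the persistent Betti numbers $\beta_m^n(\Bbbk)$ (the paper's Lemma~\ref{lem:pbn}), use naturality of the UCT together with freeness of $H_{q-1}$ to identify $\phi_m^n$ with $f_{m,n}\otimes\Bbbk$ (the paper's Theorem~\ref{thm:ucm}), observe that $H_q(X_n;\Zint)=H_q(X_n,X_0;\Zint)$ is free, and use the long exact sequence of the pair to see that $\coker f_{m,n}$ embeds in the free group $H_q(X_n,X_m;\Zint)$ and is therefore free (the paper's Proposition~\ref{prop:freefree}). The only cosmetic difference is the last step: the paper invokes the Smith normal form of $f_{m,n}$ and reads off that all elementary divisors equal $1$, whereas you argue directly that the short exact sequence $0\to J\to H_q(X_n;\Zint)\to\coker f_{m,n}\to 0$ stays exact after $\otimes\,\Bbbk$; these are equivalent ways of saying the same thing.
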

This theorem yields the following corollaries.
\begin{corollary}\label{cor:M_0}
  $D_0(\X; \Bbbk)$ is always independent of the choice of $\Bbbk$.
\end{corollary}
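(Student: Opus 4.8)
The plan is to obtain Corollary~\ref{cor:M_0} as a direct special case of Theorem~\ref{thm:allfree} with $q = 0$, by checking the two hypotheses of that theorem. One of them is immediate: $H_{-1}(X_n; \Zint) = 0$ is free for every $n \in T$. So the whole content is to verify that $H_0(X_n, X_m; \Zint)$ is a free abelian group for every $0 \le m < n$ in $T$.

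For this I would use the long exact sequence of the pair $(X_n, X_m)$ in the range of degree $0$,
\[
  H_0(X_m; \Zint) \xrightarrow{\iota_*} H_0(X_n; \Zint) \longrightarrow H_0(X_n, X_m; \Zint) \longrightarrow H_{-1}(X_m; \Zint) = 0,
\]
which identifies $H_0(X_n, X_m; \Zint)$ with $\coker \iota_*$. Now $H_0(X_n; \Zint)$ is free, with a canonical basis indexed by the path components of $X_n$, and $\iota_*$, being induced by the inclusion $X_m \hookrightarrow X_n$, carries the basis element of a component of $X_m$ to the basis element of the unique component of $X_n$ containing it. Hence $\im \iota_*$ is exactly the span of those basis elements of $H_0(X_n; \Zint)$ corresponding to components of $X_n$ that meet $X_m$; being the span of a subset of a basis, $\im \iota_*$ is a direct summand, so $\coker \iota_*$ is free — indeed free on the components of $X_n$ disjoint from $X_m$. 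The boundary case $m = 0$, where $X_m = X_0 = \emptyset$, is covered as well, since then $\coker \iota_* = H_0(X_n; \Zint)$ is free.

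Having verified both hypotheses of Theorem~\ref{thm:allfree} for $q = 0$, we conclude that $D_0(\X; \Bbbk)$ is independent of the choice of $\Bbbk$. The only point that needs care is the freeness of $\coker \iota_*$: a quotient of a free abelian group need not be free in general, so one must genuinely exploit that the image of $\iota_*$ is spanned by a subset of the preferred component-basis (equivalently, that $\iota_*$ becomes inclusion of a coordinate subgroup in a basis adapted to connected components). Everything else is a routine invocation of the cited theorem, so I do not expect any substantial obstacle here.
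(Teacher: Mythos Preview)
Your argument is correct and follows exactly the route the paper indicates: apply Theorem~\ref{thm:allfree} with $q=0$, using $H_{-1}(\cdot)=0$ and the freeness of $H_0(X_n,X_m;\Zint)$. The paper simply asserts this freeness as a standard fact, whereas you spell it out via the long exact sequence and the path-component basis of $H_0$; this added detail is accurate and fills in what the paper leaves implicit.
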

\begin{corollary}\label{cor:M_munus_1}
  When $\X$ is a filtration of finite cell/simplicial/cubical complexes embedded
  in $\R^M$, the $(M-1)$th persistent homology gives the same
  PD among any fields $\Bbbk$.
\end{corollary}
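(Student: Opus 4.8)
To prove Corollary~\ref{cor:M_munus_1} the plan is to invoke Theorem~\ref{thm:allfree} with $q = M-1$. Writing the filtration as $\X = (X_t)_{t\in T}$ with each $X_t$ a finite complex sitting inside $\R^M \subset S^M$, I would verify the two integral freeness hypotheses of that theorem: that $H_{M-1}(X_n, X_m; \Zint)$ is free for all $0 \le m < n \in T$, and that $H_{M-2}(X_n; \Zint)$ is free for all $n \in T$. (The case $M \le 1$ is already covered by Corollary~\ref{cor:M_0}, so assume $M \ge 2$; note also that every $X_t$ has covering dimension $\le M$, so nothing lives above degree $M$.) In both cases I would detect the torsion cohomologically: for a chain complex of finitely generated free abelian groups the torsion subgroup of $H_k(-;\Zint)$ is isomorphic to that of $H^{k+1}(-;\Zint)$ (a standard consequence of the universal coefficient theorem), so it is enough to show that $H^M(X_n, X_m; \Zint)$ and $H^{M-1}(X_n; \Zint)$ are torsion-free.

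The essential tool is Alexander duality. Each $X_t$, being a finite CW complex, is a compact, locally contractible subspace of $S^M$, and it is a proper subspace because $X_t \subset \R^M$; hence Alexander duality gives $\tilde H^{M-1}(X_n; \Zint) \cong \tilde H_0(S^M \setminus X_n; \Zint)$, and a reduced zeroth homology group is always free. This settles the hypothesis on $H_{M-2}(X_n;\Zint)$. For the relative hypothesis I would appeal to the relative version of Alexander duality for $X_m \subset X_n \subset S^M$: comparing the long exact cohomology sequence of the pair $(X_n, X_m)$ with the long exact homology sequence of the pair $(S^M \setminus X_m,\ S^M \setminus X_n)$ (note that $S^M \setminus X_n \subset S^M \setminus X_m$), and feeding the absolute duality isomorphisms — which are natural for the relevant inclusions — into the five lemma, yields $H^M(X_n, X_m; \Zint) \cong H_0(S^M \setminus X_m,\ S^M \setminus X_n; \Zint)$. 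The right-hand side is a relative zeroth homology group, hence the free abelian group on the set of path components of $S^M \setminus X_m$ that meet no point of $S^M \setminus X_n$; in particular it is torsion-free. This gives the relative hypothesis, and Theorem~\ref{thm:allfree} then yields the corollary.

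The step I expect to require the most care is setting up the relative Alexander duality isomorphism cleanly: one must keep straight which complement contains which, check that the absolute Alexander isomorphisms are natural enough for the five lemma to apply, and make sure the point-set hypotheses (compactness and local contractibility of each $X_t$, and properness, so that the complements are nonempty) are in force. It also seems worth emphasizing why the relative input is genuinely needed and not a mere technicality: the M\"obius-strip example in the introduction shows that freeness of the absolute groups $H_{M-1}(X_t; \Zint)$ alone does not force independence of the field; one must also control the relative groups $H_{M-1}(X_n, X_m; \Zint)$, and it is exactly the constraint imposed by the embedding into $\R^M$ that, through Alexander duality, forces those relative groups to be free.
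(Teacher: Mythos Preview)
Your proposal is correct and follows essentially the same route as the paper: reduce to Theorem~\ref{thm:allfree} with $q=M-1$ and verify its two freeness hypotheses via Alexander duality in $S^M$, which converts the relevant $(M-1)$st and $M$th (relative) integral groups into degree-zero groups that are visibly free. The only difference is cosmetic---the paper dualizes $H_{M-1}(X_n,X_m;\Zint)$ directly to a $0$th relative \emph{co}homology group of the complement pair, whereas you first pass to $H^{M}(X_n,X_m;\Zint)$ via the universal coefficient theorem and then dualize that to a $0$th relative \emph{homology} group; both are standard forms of relative Alexander duality.
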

Corollary~\ref{cor:M_0} derives from the fact that $H_{-1}(\cdot) = 0$ and $H_0(X_n, X_m; \Zint)$
is free for any $n > m$.
Corollary~\ref{cor:M_munus_1} is proved in Section~\ref{subsec:M_minus_1}.
The above two corollaries ensure that if a filtration is embedded in $\R^2$, 
all non-trivial persistence diagrams $D_0$ and $D_1$ do not depend on the choice of the 
coefficient field. This is the case of alpha filtrations for a 2D point cloud.

We also have the following theorem which provides a sufficient condition
for the freeness of $H_q(X_n, X_m; \Zint)$.
\begin{theorem}\label{thm:allfree2}
  For a given $q$, 
  $H_q(X_n, X_m; \Zint)$ are free for any $0\leq m < n \in T$ if
  $D_q(\X; \Bbbk)$ is independent of the choice of $\Bbbk$ and
  $H_{q-1}(X_n; \Zint)$ is free for any $n \in T$.
\end{theorem}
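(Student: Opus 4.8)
The plan is to replace the interval decomposition by the rank invariant: for every field $\Bbbk$ the persistence diagram $D_q(\X;\Bbbk)$ and the function $(s,t)\mapsto\rank_\Bbbk\phi_s^t$ determine each other (the multiplicity of a birth--death pair is an alternating sum of ranks $\rank_\Bbbk\phi_s^t$; for $T=\R_+$ one first reduces to $T=\{0,\dots,N\}$ using finiteness of the filtration). So the hypothesis is equivalent to: $\rank_\Bbbk\phi_s^t$ does not depend on $\Bbbk$, for every $s\le t$, and I will use it in this form. I then analyse one relative group $H_q(X_n,X_m;\Zint)$ via the long exact sequence of the pair, reducing its torsion to the torsion of the cokernel of an integral homology map that the rank hypothesis controls.

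First I would show that the absolute groups are already forced to be free. By the universal coefficient theorem together with the freeness of $H_{q-1}(X_l;\Zint)$, the $\Tor$ term vanishes and the natural map $H_q(X_l;\Zint)\otimes\Bbbk\to H_q(X_l;\Bbbk)$ is an isomorphism for every field $\Bbbk$, so $\dim_\Bbbk H_q(X_l;\Bbbk)=\rank_\Zint H_q(X_l;\Zint)+\dim_\Bbbk\Tor(H_q(X_l;\Zint),\Bbbk)$. Since $\dim_\Bbbk H_q(X_l;\Bbbk)=\rank_\Bbbk\phi_l^l$ is field-independent, the $\Tor$ term must vanish for all $\Bbbk$, hence $H_q(X_l;\Zint)$ is torsion-free, so free. (This also disposes of the case $m=0$, since $H_q(X_n,X_0;\Zint)\cong H_q(X_n;\Zint)$.)

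Now fix $0\le m<n$ and let $f\colon H_q(X_m;\Zint)\to H_q(X_n;\Zint)$ be induced by inclusion. The long exact sequence of the pair $(X_n,X_m)$ exhibits $H_q(X_n,X_m;\Zint)$ as an extension of $\ker\!\big(H_{q-1}(X_m;\Zint)\to H_{q-1}(X_n;\Zint)\big)$ by $\coker f$; this kernel is a subgroup of the free group $H_{q-1}(X_m;\Zint)$, hence free, so the extension splits and $H_q(X_n,X_m;\Zint)\cong\coker f\oplus(\text{free})$. It therefore suffices to prove $\coker f$ is torsion-free. By the previous paragraph $f$ is a homomorphism between finitely generated free $\Zint$-modules, and by naturality of the universal coefficient isomorphism the map $\phi_m^n$ over $\Bbbk$ is identified with $f\otimes\Bbbk$; thus $\rank_{\Zint_p}\phi_m^n$ is the rank of an integer matrix of $f$ reduced mod $p$, while $\rank_\Bbbk\phi_m^n=\rank_\Zint f$ for $\Bbbk$ of characteristic $0$. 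By Smith normal form the former is strictly smaller than the latter precisely when $p$ divides some invariant factor of $f$, i.e.\ precisely when $\coker f$ has $p$-torsion. Hence if $\coker f$ had torsion there would be a prime $p$ with $\rank_{\Zint_p}\phi_m^n<\rank_{\mathbb{Q}}\phi_m^n$, contradicting field-independence of the rank invariant. So $\coker f$ is free, and therefore so is $H_q(X_n,X_m;\Zint)$.

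I expect the main obstacle to be conceptual rather than technical: a priori the absolute groups $H_q(X_n;\Zint)$ may carry torsion, and then neither the identification $\phi_m^n\cong f\otimes\Bbbk$ nor the clean splitting of the pair sequence is available. The key realisation is that field-independence of the diagram \emph{itself} forces every $H_q(X_n;\Zint)$ to be free; after that, the argument reduces to the universal coefficient theorem, the long exact sequence of a pair, and Smith normal form. A minor point that still needs a sentence of justification is the equivalence between the persistence diagram and the rank invariant when $T=\R_+$, which follows from finiteness of the filtration.
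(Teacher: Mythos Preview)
Your proof is correct and follows essentially the same route as the paper: reduce to field-independence of the rank invariant (Lemma~\ref{lem:pbn}), deduce freeness of $H_q(X_n;\Zint)$ from the universal coefficient theorem applied at $m=n$, use Smith normal form to conclude that $\coker\phi_m^n$ is free, and then assemble $H_q(X_n,X_m;\Zint)$ from the long exact sequence of the pair. The only cosmetic difference is that you phrase the last step as a splitting of a short exact sequence with free quotient, whereas the paper packages the same argument as Proposition~\ref{prop:quotfree}.
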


From the above two theorems, we have the following corollary.
\begin{corollary}\label{cor:allfree3}
  Let $M$ be a non-negative integer.
  $D_q(\X; \Bbbk)$ is independent of the choice of $\Bbbk$ for
  all $q = 0, \ldots, M$ if and only if
  $H_q(X_n, X_m; \Zint)$ are free for any $0\leq m < n \in T$, and $q = 0, \ldots, M$.
\end{corollary}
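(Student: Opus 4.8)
The plan is to derive Corollary~\ref{cor:allfree3} purely formally from Theorems~\ref{thm:allfree} and~\ref{thm:allfree2}, treating each direction of the ``if and only if'' separately; the content has already been done, so this is a bookkeeping argument.

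For the ``if'' direction, suppose $H_q(X_n,X_m;\Zint)$ is free for all $0\le m<n\in T$ and all $q=0,\ldots,M$. Fix such a $q$. By Corollary~\ref{cor:M_0} the case $q=0$ needs nothing, and for $q\ge 1$ we have, in particular, that $H_{q-1}(X_n,X_0;\Zint)=H_{q-1}(X_n;\Zint)$ is free for every $n\in T$ (using $X_0=\emptyset$ and $q-1\le M$). Thus both hypotheses of Theorem~\ref{thm:allfree} are satisfied, and we conclude $D_q(\X;\Bbbk)$ is independent of the choice of $\Bbbk$. Since $q\in\{0,\ldots,M\}$ was arbitrary, this gives the forward implication.

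For the ``only if'' direction, suppose $D_q(\X;\Bbbk)$ is independent of $\Bbbk$ for all $q=0,\ldots,M$. I would argue by induction on $q$ that $H_q(X_n,X_m;\Zint)$ is free for all $0\le m<n$. The base case $q=0$ is again Corollary~\ref{cor:M_0}'s underlying fact that $H_0(X_n,X_m;\Zint)$ is always free, which holds unconditionally. For the inductive step, assume $H_{q'}(X_n,X_m;\Zint)$ is free for all relevant $n,m$ and all $q'<q$; in particular $H_{q-1}(X_n;\Zint)=H_{q-1}(X_n,X_0;\Zint)$ is free for every $n$. Combined with the hypothesis that $D_q(\X;\Bbbk)$ is independent of $\Bbbk$, Theorem~\ref{thm:allfree2} applies and yields that $H_q(X_n,X_m;\Zint)$ is free for all $0\le m<n$, completing the induction.

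I do not expect a genuine obstacle here, since both implications are immediate once the freeness of $H_{q-1}(X_n;\Zint)$ is extracted; the only point requiring care is the bookkeeping that makes that extraction legitimate. Specifically, Theorems~\ref{thm:allfree} and~\ref{thm:allfree2} each require freeness of $H_{q-1}(X_n;\Zint)$ as a \emph{separate} hypothesis, not automatically implied by freeness of the relative groups in degree $q$ alone; the induction (for the ``only if'' part) and the observation $H_{q-1}(X_n;\Zint)=H_{q-1}(X_n,X_0;\Zint)$ together with $q-1\le M$ (for the ``if'' part) are exactly what supply it. One should also note that the statement ranges over $q=0,\ldots,M$ as a block, so that degree $q-1$ is always within the range being controlled, which is why no hypothesis on degree $M+1$ or on negative degrees is needed (and $H_{-1}=0$ is trivially free).
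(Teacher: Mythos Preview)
Your proof is correct and follows essentially the same approach as the paper: the ``if'' direction is obtained directly from Theorem~\ref{thm:allfree} (using $H_{q-1}(X_n;\Zint)=H_{q-1}(X_n,X_0;\Zint)$ to supply the extra hypothesis), and the ``only if'' direction is by induction on $q$ with Theorem~\ref{thm:allfree2} driving the inductive step. If anything, your write-up is more explicit than the paper's about how the freeness of $H_{q-1}(X_n;\Zint)$ is extracted, which is a good thing.
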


From the above discussion the following question arises.
\begin{question}
  Is there an efficient algorithm for checking the condition of
  Corollary~\ref{cor:allfree3}?
\end{question}
Such an algorithm would be useful to provide information as to whether we should be concerned about field choice.
Of course, we can compute relative homology groups for all $m < n$ on a computer,
but that would be cumbersome and inefficient
because the number of possible pairs $(m, n)$ is $(N+1)N/2$.
Thus, the computation cost (time complexity)
is $O(N^2 G)$, where
$G$ is the average cost of computing $H_q(X_n, X_m; \Zint)$.
It is known that the time complexity of computing a PD is $O(G)$
\footnote{To compute the torsion subgroup of a homology group, we need to compute the Smith normal form of the boundary matrix, and the computational cost of the Smith normal form is $O(n^{\theta})$ in the worst case where $n$ is the number of simplices and $\theta \approx 2.376$ is a constant. The constant $\theta$ derives from the time complexity of the multiplication of two $n \times n$ matrices. The time complexity of persistent homology is also $O(n^{\theta})$ in the worst case. See \cite{Milosavljevic:2011:ZPH:1998196.1998229,Storjohann} for further details.}.

To describe the algorithm, we assume the following condition.
\begin{cond}\label{cond:finite}
  $X = \{\sigma_1, \sigma_2, \ldots, \sigma_N\}$ is a finite simplicial, cubical, or cell
  complex and the subset $X_k = \{\sigma_1, \ldots, \sigma_k\}$ is a sub-complex of
  $X$.
\end{cond}
With this setting, we consider the filtration of complexes
$\X: \emptyset = X_0 \subset X_1 \subset \cdots \subset X_N$.
Since the filtration is finite, we can transform the persistence decomposition problem
into the problem under Condition~\ref{cond:finite}. 

The following theorem is proved in Section~\ref{sec:algorithm}. 
\begin{theorem}\label{thm:allfreealg}
  There is an algorithm for judging the condition in Corollary~\ref{cor:allfree3}
  whose
  time complexity is the same as the algorithm for computing a PD.
\end{theorem}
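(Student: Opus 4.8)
The goal is to check, within the time budget of a single persistent homology computation, whether $H_q(X_n,X_m;\Zint)$ is free for all $0\le m<n\le N$ and all $q$. Let me think about how to avoid the naive $O(N^2 G)$ bound.

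The key observation: relative homology $H_q(X_n,X_m;\Zint)$ is the homology of the quotient chain complex $C_*(X_n)/C_*(X_m)$, which is the free chain complex on the simplices $\sigma_{m+1},\dots,\sigma_n$. Over $\Zint$, torsion in the homology of a based free chain complex is detected by the Smith normal forms of the boundary maps. So the plan is to run a single matrix reduction on the full boundary matrix $\partial$ of $X$ (exactly the matrix used in the standard persistent homology algorithm), and read off torsion information for *all* the "interval" subcomplexes $\{m+1,\dots,n\}$ simultaneously. The hope is that the column/row reduction producing the persistence pairing also produces, essentially for free, the local torsion data.

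First I would recall the characterization (from the earlier sections, in the spirit of the Möbius example): $D_q(\X;\Bbbk)$ is independent of $\Bbbk$ iff no "pivot" entry in the reduced boundary matrix is divisible by a prime — equivalently, the reduction of $\partial$ over $\Zint$ can be carried out so that every pivot is $\pm 1$. More precisely, I expect the paper has already reduced freeness of all $H_q(X_n,X_m;\Zint)$ to the statement that the standard persistence reduction of $\partial_\Zint$ never forces a non-unit pivot; the birth--death pair $(m,n)$ corresponds exactly to the columns/rows indexed by $\sigma_m,\sigma_n$, and the relative torsion $\Tor H_q(X_n,X_m;\Zint)$ is governed by the gcd of the minors localized to that block. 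Then I would argue that a single left-to-right column reduction over $\Zint$ (with the usual low-index pairing), tracking the integer pivot values, suffices: if at any step the pivot that would be created is not a unit, the answer is "not independent"; otherwise it is "independent". The correctness of this is the content of Corollary~\ref{cor:allfree3} combined with the structure theory.

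The main obstacle — and the part requiring real care — is showing that this single $\Zint$-reduction really certifies freeness for *every* pair $(m,n)$, not just for the "global" complex $X=X_N$. The subtlety is that $H_q(X_n,X_m;\Zint)$ sees only the sub-block of $\partial$ on columns/rows $m+1,\dots,n$, and a priori a block could have torsion even when the full matrix reduces with unit pivots, or vice versa. I would handle this by invoking the compatibility of the persistence reduction with the filtration: because $X_k$ is a subcomplex for every $k$, the reduced matrix restricted to any interval block is itself a valid reduced form of the corresponding relative boundary matrix (the reduction only adds earlier columns to later ones, which stays within the block when we restrict), and hence its pivots are exactly the relevant invariant factors up to units. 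This reduces everything to: the reduced $\partial_\Zint$ has all unit pivots $\iff$ every interval block does $\iff$ every $H_q(X_n,X_m;\Zint)$ is free. The complexity claim is then immediate: we perform one reduction of one $N\times N$ (simplex-indexed) matrix, the same asymptotic cost $O(G)=O(N^\theta)$ as computing the PD, with only $O(1)$ extra bookkeeping per column to check whether the pivot is $\pm 1$.

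One remaining technical point to address in the write-up is arithmetic blow-up: over $\Zint$ the intermediate entries could grow, so to keep the complexity honest I would either (i) observe that we only need the pivots modulo the question "is it $\pm1$", which can be tracked via computations modulo a few small primes (if a pivot is $\pm1$ mod $p$ for the relevant primes and over $\Q$ the pair exists, it is $\pm1$), or (ii) appeal to the standard fact, cited already in the footnote, that Smith-normal-form-type reductions can be done in $O(n^\theta)$ bit-complexity with fraction-free or modular techniques (Storjohann). Either way the stated equality of time complexities holds.
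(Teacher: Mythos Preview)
Your algorithm is exactly the paper's Algorithm~\ref{alg:allfree}: run the left-to-right column reduction over $\Zint$ and halt the first time a pivot is not a unit. Your ``independent'' direction is also essentially the paper's argument. The restriction claim you sketch is correct: column operations coming from indices $i\le m$ only touch rows $\le i-1\le m-1$, so the globally reduced matrix restricted to rows and columns $m+1,\dots,n$ is a valid reduced form of the relative boundary, and when every pivot there is $\pm 1$ one can clear above each pivot by integer row operations to see that $H_q(X_n,X_m;\Zint)$ is free. The paper phrases the same thing via an explicit adapted basis $\{\tilde\sigma_k\}$ satisfying Condition~\ref{cond:phbasis}.

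The ``dependent'' direction, however, has a genuine gap. Your key inference---``hence its pivots are exactly the relevant invariant factors up to units''---is false for column-reduced integer matrices. Take the cell complex with $\partial\sigma_1=\partial\sigma_2=0$ and $\partial\sigma_3=2\sigma_1+3\sigma_2$: column~$3$ is already reduced with low~$2$ and pivot~$3$, yet $H_*(X_3;\Zint)\cong\Zint^2/\langle(2,3)\rangle\cong\Zint$ is free; the invariant factor is~$1$, not~$3$. So a non-unit pivot in an interval block does \emph{not} by itself force torsion in that block, and your chain of equivalences breaks. What actually rescues the algorithm is a \emph{specific} choice of pair: if $n$ is the first column with non-unit pivot $p$ and one sets $m:=L_{\hat B}(n)-1$, then (because all earlier pivots are $\pm1$) the adapted basis exists on $X_{n-1}$, one checks that $\tilde\sigma_{m+1}$ is a cycle (the Claim in Appendix~\ref{subsec:proof-algpd-basis}) and that $m+1$ is not the low of any earlier column (so $m+1\in E_n$), and then $\partial\hat\sigma_n\equiv p\,\tilde\sigma_{m+1}$ in $C_*(X_n,X_m)$ produces a $\Zint_p$ summand. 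In the toy example this is $(m,n)=(1,3)$, where indeed $H_*(X_3,X_1;\Zint)\cong\Zint_3$. Your restriction argument does not supply these two checks.

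Your remark on coefficient growth is a fair concern but orthogonal to the theorem as stated: the paper compares to Algorithm~\ref{alg:pd} at the level of arithmetic operations, not bit complexity, and in that model the two algorithms differ only by an $O(1)$ pivot test per column.
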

The algorithm is shown in Algorithm~\ref{alg:allfree} in Section~\ref{sec:algorithm}.
In Section~\ref{sec:implementation}, we apply the algorithm to some examples shown in Section~\ref{sec:example} and demonstrate that
it performs well. A performance benchmark is also covered in that section.

It is noteworthy that Dionysus2\footnote{\url{https://mrzv.org/software/dionysus2/index.html}}, the persistent homology software developed by Morozov, has a similar algorithm\footnote{Omni-field Persistence: \url{https://mrzv.org/software/dionysus2/tutorial/omni-field.html}}. However, we cannot find any description of the mathematical background of the algorithm. It is likely that the program keeps tracks of the torsion information in a rational number and the mathematical mechanism of the program is similar to Algorithm~\ref{alg:allfree}.

We now pose the following additional question.
\begin{question}
  How often do we face filtrations with non-trivial torsion subgroups?
\end{question}
We can construct such an example by a M\"obius strip as shown in Example~\ref{exm:mobius}, but
would we often face such a filtration?
To demonstrate the probability of torsions
we conduct a numerical experiment for random data in $\R^3$.
From this experiment, we show that filtrations with non-trivial
torsion subgroups are very rare This suggests that, in practical terms,
if the data are in $\R^3$,
we do not need to be particularly concerned about the torsion problem.
We also conduct another numerical experiment for random filtrations in high dimensional
simplex. This second experiment shows that the filtrations with non-trivial torsion subgroups
are to be expected when the space is high dimensional. 

The following question is also important.
\begin{question}
  When $D_q(\X; \Bbbk)$ changes depending of the choice of $\Bbbk$,
  how does $D_q(\X; \Bbbk)$ change?
\end{question}

In the above example about a M\"obius strip, a long interval $I(1, \infty)$ is
split into two shorter intervals, $ I(1, 2)$ and $I(2, \infty)$, when
$\Bbbk$ changes from $\R$ to $\Zint_2$.
From the example, we expect that a long interval indecomposable tends to be split
into shorter intervals when $\Bbbk$ changes from $\R$ to $\Zint_p$.
The following theorem proved in Section~\ref{sec:convex} partially answers the question.
\begin{theorem}\label{thm:r-wasserstein}
  Let $q$ be a positive integer.
  Assume that $H_q(X_t; \Zint)$ and $H_{q-1}(X_t; \Zint)$
  are free for all $t$ and $H_q(\cup_t X_t) = 0$.
  Let $f$ be a $C^2$ convex function on $[0, \infty)$ with $f(0) = 0$.
  Then the following inequality holds:
  \begin{align*}
    \sum_{(b, d) \in D_q(\X; \R)} f(d - b) \geq
    \sum_{(b, d) \in D_q(\X; \Zint_p)} f(d - b).
  \end{align*}
  When $f$ is strictly convex, 
  the equality holds if and only if $D_q(\X; \R) = D_q(\X; \Zint_p)$ for all $p$.
\end{theorem}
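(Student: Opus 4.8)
The plan is to collapse both persistence diagrams onto one object — the $\Zint$-persistence module $H_q(\X;\Zint)$ — compare the resulting rank invariants pointwise, and then push that comparison through the convexity of $f$. By Condition~\ref{cond:finite} I would assume $\X:\emptyset=X_0\subset\cdots\subset X_N$ is a filtration of a finite complex. Note that the hypotheses here are \emph{weaker} than those of Theorem~\ref{thm:allfree} (the relative groups $H_q(X_n,X_m;\Zint)$ may carry torsion), so $D_q(\X;\R)$ and $D_q(\X;\Zint_p)$ genuinely differ in general; but since $H_q(\bigcup_t X_t;\Zint)=0$, every interval in $D_q(\X;\Bbbk)$ is bounded, for every $\Bbbk$.

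\emph{Step 1 (from diagrams to rank invariants).} Since $H_{q-1}(X_t;\Zint)$ is free the $\Tor$ term in the universal coefficient theorem vanishes, and, using naturality of that theorem in the filtration, I would obtain an isomorphism of persistence modules $H_q(\X;\Bbbk)\cong H_q(\X;\Zint)\otimes_{\Zint}\Bbbk$ for $\Bbbk\in\{\R,\Zint_p\}$. Writing $\phi_i^j:H_q(X_i;\Zint)\to H_q(X_j;\Zint)$ and $r_\Bbbk(i,j)=\rank\bigl(\phi_i^j\otimes\Bbbk\bigr)$ for the corresponding rank invariant, and using that $H_q(X_i;\Zint)$ is also free, I would put $\phi_i^j$ in Smith normal form: then $r_\R(i,j)$ equals the number of nonzero elementary divisors of $\phi_i^j$, while $r_{\Zint_p}(i,j)$ equals the number of those not divisible by $p$. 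Hence
\[
  r_\R(i,j)\ \geq\ r_{\Zint_p}(i,j)\quad\text{for all } i\leq j,
\]
and for $i=j$ both sides equal $\dim_{\Bbbk}H_q(X_i;\Bbbk)=\rank H_q(X_i;\Zint)$, which is independent of $\Bbbk$.

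\emph{Step 2 (from rank invariants to the convex inequality).} For a diagram $D$ of bounded intervals, Möbius inversion gives $\sum_{(b,d)\in D}(d-b-s)_+=\int_s^\infty P_D(u)\,du$ with $P_D(u)=\#\{(b,d)\in D:\ d-b>u\}$, and a telescoping computation (finite because the intervals are bounded) gives $\int_k^\infty P_D(u)\,du=\sum_i r_D(i,i+k)$ for every integer $k\geq 0$; in particular $\int_0^\infty P_D(u)\,du=\sum_i r_D(i,i)=\sum_t\dim_{\Bbbk}H_q(X_t;\Bbbk)$. Since $f$ is $C^2$ with $f(0)=0$, write $f(\ell)=f'(0)\,\ell+\int_0^\infty(\ell-s)_+f''(s)\,ds$ with $f''\geq 0$, so that
\[
  \sum_{(b,d)\in D}f(d-b)=f'(0)\sum_t\dim_{\Bbbk}H_q(X_t;\Bbbk)+\int_0^\infty\Bigl(\sum_{(b,d)\in D}(d-b-s)_+\Bigr)f''(s)\,ds.
\]
By Step 1 the first summand is the same for $\Bbbk=\R$ and $\Bbbk=\Zint_p$, and $s\mapsto\sum_{(b,d)\in D}(d-b-s)_+$ is the continuous piecewise-linear function determined by the values $\sum_i r_D(i,i+k)$, $k\geq0$; so the pointwise domination of Step 1 yields $\sum_{(b,d)\in D_q(\X;\R)}(d-b-s)_+\geq\sum_{(b,d)\in D_q(\X;\Zint_p)}(d-b-s)_+$ for every $s\geq 0$. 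Integrating against $f''\geq 0$ gives the inequality. For the equality clause: if $f$ is strictly convex and $C^2$ then $f''$ is positive on a dense subset of $[0,\infty)$, so equality forces the nonnegative continuous function $\Delta(s)=\sum_{D_q(\X;\R)}(d-b-s)_+-\sum_{D_q(\X;\Zint_p)}(d-b-s)_+$ to vanish identically; reading off the piecewise-linear coefficients of $\Delta$ gives $\sum_i r_\R(i,i+k)=\sum_i r_{\Zint_p}(i,i+k)$ for all $k$, and combined with the pointwise inequality of Step 1 (every summand is nonnegative) this forces $r_\R(i,j)=r_{\Zint_p}(i,j)$ for all $i\leq j$, whence $D_q(\X;\R)=D_q(\X;\Zint_p)$ because the rank invariant determines a one-parameter persistence diagram. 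The reverse implication is trivial.

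The hard part, I expect, will be Step 1: one has to be careful that the universal-coefficient isomorphism can be chosen naturally in the filtration, so that it identifies the whole persistence modules and not merely the individual groups, and one has to keep track of where each hypothesis is used — freeness of $H_{q-1}(X_t;\Zint)$ to kill the $\Tor$ term, and freeness of $H_q(X_t;\Zint)$ both to justify the elementary-divisor comparison and to make the diagonal of the rank invariant field-independent. The Möbius-inversion/telescoping identities in Step 2 and the upgrade in the equality case from equal \emph{summed} rank invariants to equal \emph{pointwise} rank invariants are routine once the setup is in place.
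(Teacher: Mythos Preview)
Your Step~1 coincides with the paper's argument: both derive the pointwise inequality $\beta_m^n(\R)\geq\beta_m^n(\Zint_p)$ by putting $\phi_m^n$ in Smith normal form (this is the computation in Section~\ref{sec:proof-allfree}), using freeness of $H_q(X_t;\Zint)$ and $H_{q-1}(X_t;\Zint)$ exactly where you indicate. Step~2 is a genuinely different route. The paper performs an Abel-type rearrangement \eqref{eq:f_sum}, writing $\sum_{(b,d)}f(d-b)$ as $\sum_{k<\ell}\beta_{r_k}^{r_\ell}(\Bbbk)\,c_{k,\ell}$, where each coefficient $c_{k,\ell}$ is a mixed second difference of $f$ and hence nonnegative by convexity \eqref{eq:f_convex}; the inequality then follows term by term, and for the equality clause one simply observes that strict convexity makes every $c_{k,\ell}$ strictly positive while a discrepancy between the diagrams forces some $\beta$-gap to be strictly positive. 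Your Taylor representation $f(\ell)=f'(0)\ell+\int_0^\infty(\ell-s)_+f''(s)\,ds$ instead reduces the general inequality to the single family $f_s(\ell)=(\ell-s)_+$; this is a clean and more conceptual repackaging of the same convexity, and your density argument for the equality case is a pleasant alternative to the paper's direct check.

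One caveat: the theorem is stated for a real-parameter filtration, and the proof in Section~\ref{sec:convex} deliberately keeps the real filtration values $r_0<\cdots<r_N$ because the lifetimes $d-b$ are \emph{not} preserved under reparametrisation to $\{0,\ldots,N\}$. Your identity $\int_k^\infty P_D(u)\,du=\sum_i r_D(i,i+k)$ and the subsequent piecewise-linear interpolation between integer $k$ tacitly assume unit spacing of the filtration values, so as written Step~2 only covers $T=\{0,\ldots,N\}$. The fix is immediate: for every real $s\geq 0$ one has
\[
\sum_{(b,d)\in D}(d-b-s)_+\;=\;\int_0^\infty\beta_t^{\,t+s}(\Bbbk)\,dt,
\]
since both sides equal $\sum_{(b,d)}\lvert\{t\geq 0:\ b\leq t<d-s\}\rvert$; the pointwise inequality from Step~1 then yields $\sum_{D_q(\X;\R)}(d-b-s)_+\geq\sum_{D_q(\X;\Zint_p)}(d-b-s)_+$ for arbitrary real filtration values, and the remainder of your argument (including the equality analysis) goes through unchanged.
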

For $f(x) = x^r$ with $r > 1$, the inequality means
\begin{align*}
    W_r(D_q(\X; \R), \emptyset) \geq W_r(D_q(\X; \Zint_p), \emptyset),
\end{align*}
where $W_r$ is the $r$-Wasserstein distance.
In some sense, the $r$-Wasserstein distance from the empty diagram indicates
the information richness of the diagram. Therefore, 
$D_q(\X; \R)$ contains richer information than $D_q(\X; \Zint_p)$ under the condition
of the theorem.

\subsection{Related works}

 \cite{BJDMC} proposed an efficient algorithm to simultaneously compute PDs for multiple coefficient fields by utilizing the Chinese remainder theorem.

\cite{Carlsson2008} analyzed the space of small (3x3) image patches collected from natural images. In that study, a Klein bottle, a topological structure with non-trivial torsion subgroup in its homology subgroup, played an important role. They developed a theoretical model for the high-density 2-dimensional submanifold showing that it has the topology of the Klein bottle in the space of image patches. Each image patch is represented as a point in $S^7$ and the points are analyzed by using persistent homology. Those authors used persistent homology with $\Zint_2$ coefficient field to analyze the data so it was difficult to directly find evidence of the Klein bottle. Therefore, they developed some techniques to verify the Klein bottle.

\cite{doi:10.1080/10586458.2018.1473821} experimentally investigate the torsion subgroup in random $d$-complex which is a subcomplex of $(n-1)$-simplex.  They showed that a randomly generated simplicial complex often has a non-trivial torsion subgroup for relatively large $n$ (for example, $n=75$).

\cite{Newman2019} estimated the minimum number of vertices of simplicial complex whose homology group has a desired torsion part. He gives upper and lower bounds of the number in the following form:
\begin{equation}
    c_d(\log |G|)^{1/d} \leq (\text{the minimum number of vertices}) \leq C_d(\log|G|)^{1/d}
\end{equation}
where $G$ is the desired torsion group, $d$ is the degree of homology, and $c_d, C_d$ are two constants which depend only on $d$. The result did not say anything about the frequency of the appearance of a non-trivial torsion subgroup, but it suggested how difficult it is to find such a subgroup of a homology group. 

\cite{optimal-Day} showed that the relationship between the existence of a non-trivial torsion subgroup and the computational complexity of a kind of optimization problem on homology algebra. Their result showed that the problem essentially becomes harder if some relative homology groups in the given complex have non-trivial torsion subgroups. The relation between that study and our results is discussed in the conclusion section.

Where the persistence diagram is dependent on the field of coefficients, our main motivating example is the boundary of a M\"obius band (see Example~\ref{exm:mobius}).
This has already been considered in the topological time series analysis literature \cite[Section 3]{perea2015sliding}. 
Indeed, Perea and Harer constructed a {\em sliding-window point cloud} $SW(g)$ for some function $g$ on $\mathbb{R}$ such that the normalization of $C(SW(g))$, where $C$ is the centering map,  can be isometrically identified with $\widetilde{\phi}$, whose image can be realized as the boundary of a M\"obius band. 
Those authors then verified that the persistence diagram of $\widetilde{\phi}$ is dependent on the field of coefficients.

The universal coefficient theorem for ordinary homology plays an important role in this paper. We note that the theorem for persistence modules has already been developed by \cite{bubenik2019homological}. 
More generally, the Kunneth theorem for persistence modules has been developed in the literature \cite{bubenik2019homological,gakhar2019k,polterovich2017persistence}.

\subsection{Organization of the paper}

The remainder of the paper is organized as follows. 
Section~\ref{sec:ph} reviews the basic concepts of persistent homology.
Section~\ref{sec:example} shows some examples which exhibit the dependency of 
PDs to their coefficient fields.
Section~\ref{sec:proof-allfree} and Section~\ref{sec:proof-allfree2} prove 
Theorem~\ref{thm:allfree} and Theorem~\ref{thm:allfree2}.
Section~\ref{sec:algorithm} presents an efficient algorithm which permits judgement and the proof which testifies to the correctness of the algorithm.
Section~\ref{sec:implementation} introduces an implementation of the algorithm in HomCloud.
This section also shows the performance benchmark. Section~\ref{sec:probability} presents
numerical experiments to measure the probability of the appearance of non-trivial 
torsions in random filtrations.
Section~\ref{sec:convex} contains the proof of Theorem~\ref{thm:r-wasserstein} and, finally, conclusions are offered in Section~\ref{sec:conclusion}.

\section{Persistent homology}\label{sec:ph}
In this section, we prepare some fundamental concepts for persistent homology.

\subsection{Filtrations}

A filtration is an increasing sequence of topological spaces. One typical filtration is 
the union of $r$-balls constructed from a pointcloud in $\R^{M}$. For a pointcloud, a set of finite
points $\{x_i\}$, $X_r$ is defined as
\begin{equation}
    X_r = \cup_i B_{x_i}(r),
\end{equation}
where $B_{x}(r)$ is the closed ball whose center is $x$ and radius is $r$\footnote{Open balls are usually used to define \v{C}ech filtrations, but in this paper we use closed balls to simplify the definition of persistence Betti numbers. The nerve theorem holds for both open and closed balls.}.
The sequence of $X_r$ parameterized by $r$,
$\{X_r\}_{r \geq 0}$, is obviously a filtration. This filtration is used to investigate
the shape formed by the pointcloud. 

For a practical application of persistent homology, we usually use finite simplicial or
cubical filtrations since such filtrations are practical to consider on a computer.
One well-known filtration is the \v{C}ech filtration. The \emph{\v{C}ech complex} $\Cech(P, r)$ of a pointcloud
$P = \{x_i\}$ with radius parameter $r \geq 0$ is defined as follows:
\begin{equation}
    \Cech (P, r) = \{ \{x_{i_1}, \ldots, x_{i_k}\} \subset P \mid 
    \bigcap_{n=1}^k B_{x_{i_n}}(r) \not = \emptyset \}.
\end{equation}
The filtration $\{\Cech(P, r)\}_{r \geq 0}$ is called a \emph{\v{C}ech filtration}. From the nerve theorem,
$\Cech(P, r)$ is homotopy equivalent to $\cup_i B_{x_i}(r)$ and we can use the \v{C}ech filtration to investigate
the union of $r$-balls. There are many simplices in a \v{C}ech complex for a large
pointcloud and we usually use an \emph{alpha complex} \cite{alphashape1,alphashape2}
instead since the alpha complex is
homotopy equivalent to the \v{C}ech complex and the number of simplices of the alpha complex
is much smaller than the \v{C}ech complex. The alpha complex has another advantage in that it can be embedded in $\R^M$ but such embedding is impossible for
the \v{C}ech complex.

When a filtration is finite, it is essentially time-discrete even if $T = \R_+$.
Therefore we assume $T = \{0, \ldots, N\}$ for the proofs of this paper
except Theorem~\ref{thm:r-wasserstein}. In addition, under this assumption,
it is straightforward to configure a filtration satisfying Condition~\ref{cond:finite} by ordering simplices
appropriately; hence, we can assume the condition without loss of generality.
Since Condition~\ref{cond:finite} is useful to describe algorithms, we sometimes 
assume this and consider the filtration
$X_0 \subset X_1 \subset \cdots \subset X_N$ where
$X_k = \{\sigma_1, \ldots, \sigma_k\}$.

\subsection{Computation of a persistence diagram}
Under Condition~\ref{cond:finite}, Algorithm~\ref{alg:pd} computes the PD of the filtration~\cite{elz,zc,Otter2017}. To simplify the algorithm,
all simplices of all dimensions are mixed and
in the output all birth-death pairs of all degrees are also mixed. In this algorithm,
$L_B(j)$ means
\begin{equation}
  L_B(j) = \left\{
    \begin{array}{ll}
      \max \{i \mid B_{ij} \not = 0\} & \text{if column $j$ of $B$ is nonzero},\\
      -\infty & \text{if column $j$ of $B$ is zero},
    \end{array}
  \right. \label{eq:LB}
\end{equation}
where $B$ is a matrix and $j$ is an integer.

Furthermore, in this algorithm, matrix $B$ is reduced from left column to right column.
After terminating the algorithm, the PD is computed as follows:
\begin{equation}\label{eq:pd}
  \begin{aligned}
    D(\X) =& \{ (L_{\hat{B}}(j), j) \mid L_{\hat{B}}(j) \not = -\infty \} \\
    &\cup
    \{(j, \infty) \mid L_{\hat{B}}(j) = -\infty \text{ and } \forall i, L_{\hat{B}}(i) \not = j\}, 
  \end{aligned}
\end{equation}
where $\hat{B}$ is the matrix returned by the algorithm.
The $q$th PD is given from $D(\X)$ as follows:
\begin{equation}
    D_q(\X) = \{(i, j) \in D(\X) \mid \dim \sigma_i = q\}.
\end{equation}

\begin{algorithm}[ht]
  \caption{Algorithm to compute persistence diagrams}\label{alg:pd}
  \begin{algorithmic}
    \State $B \leftarrow $ the boundary matrix with respect to the basis $\{\sigma_1, \ldots, \sigma_N\}$ 
    \For{$j=1, \ldots, N$}
      \While{there exists $i < j$ with $L_B(i) = L_B(j) \not = -\infty$ }
         \State let $s = B_{L_B(j), j} / B_{L_B(i), i}$  
         \State add $(-s) \times (\textrm{column}\  i)$ to column $j$ of $B$  \Comment left-to-right reduction
      \EndWhile
    \EndFor
    \State \Return B
  \end{algorithmic}
\end{algorithm}

A justification for the algorithm is provided in Appendix \ref{sec:proof-alg1}.
Indeed, Theorem~\ref{thm:allfreealg} shows that the algorithm for judging the condition of Corollary~\ref{cor:allfree3}
is given by restricting Algorithm~\ref{alg:pd} to integer coefficients. Therefore, the time complexity of the Theorem~\ref{thm:allfreealg} algorithm is as per  Algorithm~\ref{alg:pd}.

\subsection{Persistent Betti number}

From the definition of a PD, we have the following relationship
between the map $H_q(X_m; \Bbbk) \to H_q(X_n; \Bbbk)$ and a PD:
\begin{equation}\label{eq:persistence-betti-number}
  \begin{aligned}
    \beta_m^n(\Bbbk) :=& \rank\left(H_q(X_m; \Bbbk) \to H_q(X_n; \Bbbk)\right) \\
    =& \#\{ (b, d) \in D_q(\X; \Bbbk) \mid b \leq m \leq n < d \}.
  \end{aligned}
\end{equation}
This $\beta_m^n(\Bbbk)$ is called a \emph{persistent Betti number} or a \emph{rank invariant}. 
Hence, the following identity holds:
\begin{equation}\label{eq:multiplicity-1}
  (\text{multiplicity of $D_q(\X; \Bbbk)$ at $(b, d)$})
  = \beta^{d-1}_b(\Bbbk) - \beta^{d-1}_{b-1}(\Bbbk) - \beta^{d}_{b}(\Bbbk)
  + \beta^{d}_{b-1}(\Bbbk).
\end{equation}

When $d = \infty$, the following equation holds instead:
\begin{equation}\label{eq:multiplicity-2}
  (\text{multiplicity of $D_q(\X; \Bbbk)$ at $(b, \infty)$})
  = \beta^{N}_b(\Bbbk) - \beta^{N}_{b-1}(\Bbbk).
\end{equation}

The next lemma follows directly from the foregoing.
\begin{lemma}\label{lem:pbn}
  $D_q(\X, \Bbbk) = D_q(\X, \Bbbk')$ if and only if
  $\beta_m^n(\Bbbk) = \beta_m^n(\Bbbk')$ for all $0 \leq m \leq n \leq N$.
\end{lemma}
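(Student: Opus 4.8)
The plan is to read off both implications directly from the identities already in hand: \eqref{eq:persistence-betti-number} shows that the persistent Betti numbers are determined by the diagram, while \eqref{eq:multiplicity-1} and \eqref{eq:multiplicity-2} show that the diagram is determined by the persistent Betti numbers, so the two objects are mutually recoverable and the equivalence is immediate.

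For the forward direction, suppose $D_q(\X; \Bbbk) = D_q(\X; \Bbbk')$. By \eqref{eq:persistence-betti-number}, for any $0 \le m \le n \le N$ the quantity $\beta_m^n(\Bbbk)$ equals the number of pairs $(b,d) \in D_q(\X; \Bbbk)$ with $b \le m \le n < d$; this count is a function of the multiset $D_q(\X;\Bbbk)$ alone, so it agrees with the corresponding count for $D_q(\X; \Bbbk')$, giving $\beta_m^n(\Bbbk) = \beta_m^n(\Bbbk')$.

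For the converse, suppose $\beta_m^n(\Bbbk) = \beta_m^n(\Bbbk')$ for all $0 \le m \le n \le N$. Because $X_0 = \emptyset$, every birth time lies in $\{1,\ldots,N\}$ and every finite death time in $\{b+1,\ldots,N\}$, so for each candidate pair the four persistent Betti numbers on the right-hand side of \eqref{eq:multiplicity-1} (respectively the two on the right of \eqref{eq:multiplicity-2} when $d=\infty$) have in-range indices and are therefore determined by the common data. Hence the multiplicity of $D_q(\X; \Bbbk)$ at every $(b,d)$, finite or infinite, equals that of $D_q(\X; \Bbbk')$, so the two diagrams coincide as multisets, i.e. $D_q(\X; \Bbbk) = D_q(\X; \Bbbk')$.

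There is no real obstacle: the equivalence is simply the statement that the inclusion--exclusion passage from \eqref{eq:persistence-betti-number} to \eqref{eq:multiplicity-1}--\eqref{eq:multiplicity-2} is invertible. The only care required is the bookkeeping, namely checking that the two multiplicity formulas between them exhaust all birth-death pairs and that no index of $\beta$ strays outside $\{0,\ldots,N\}$ (with the understanding $\beta_0^n = 0$), which the constraints $1 \le b < d$ and $d \le N$ or $d = \infty$ guarantee.
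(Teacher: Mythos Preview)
Your argument is correct and matches the paper's own treatment: the paper does not spell out a proof but simply states that the lemma ``follows directly from the foregoing,'' meaning precisely the passage via \eqref{eq:persistence-betti-number}, \eqref{eq:multiplicity-1}, and \eqref{eq:multiplicity-2} that you have written out. Your explicit check that the indices stay in range is a nice touch but not something the paper bothers with.
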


\subsection{Universal Coefficient Theorem}

The universal coefficient theorem is fundamental for homology theory
and plays an important role in this paper. We review the theorem here to foreground what follows.

The \emph{universal coefficient theorem for homology} is as follows~\cite{AT}.
\begin{theoremext}
  Let $X$ be a topological space, $\Bbbk$ a ring, 
  and $q \geq 0$. The following sequence is a natural short
  exact sequence:
  \begin{equation}
      \begin{aligned}
        0 \to H_q(X; \Zint) \otimes \Bbbk \to H_q(X; \Bbbk)  \to 
        \mathrm{Tor}(H_{q-1}(X; \Zint), \Bbbk) \to 0.
      \end{aligned}
  \end{equation}
  Furthermore, this sequence splits, though not naturally.
\end{theoremext}

We use the above theorem in the following form.
\begin{theoremext}\label{thm:ucm}
Let $X$ and $Y$ be topological spaces, $f:X \to Y$ a continuous map, 
$\Bbbk$ a ring, and $q \geq 0$.
If $H_{q-1}(X; \Zint)$ and $H_{q-1}(Y; \Zint)$ are free, the following commutative diagram holds:
\begin{equation}
 \vcenter{
 \xymatrix{
    H_q(X; \Zint)\otimes \Bbbk \ar[r]^{\simeq} \ar[d]_{f_*\otimes \mathrm{id}_{\Bbbk}} & H_q(X; \Bbbk) \ar[d]_{f_*} \\
    H_q(Y; \Zint)\otimes \Bbbk  \ar[r]^{\simeq} & H_q(Y; \Bbbk). \ar@{}[lu]|{\circlearrowright}
  }
  }
\end{equation}
\end{theoremext}
This theorem states that the induced map
$f_* : H_q(X; \Bbbk) \to H_q(Y; \Bbbk)$ is completely described by
$f_*\otimes \mathrm{id}_\Bbbk : H_q(X; \Zint) \otimes \Bbbk \to H_q(Y; \Zint)\otimes \Bbbk$ if 
$H_{q-1}(X; \Zint)$ and $H_{q-1}(Y; \Zint)$ are free.
We use the theorem for an inclusion map between simplicial/cell/cubical complexes. 

\section{Examples of diagrammatic changes induced by coefficient field changes}\label{sec:example}

In this section, we give some examples of persistent homology, whose interval decomposition depends on the choice of coefficient field.

\begin{example} \label{exm:bouquet}
Let $S^1$ be a circle.
We consider a filtration $\X : \emptyset \to S^1 \xrightarrow{f} S^1 \vee S^1 \xrightarrow{g} S^1$, where $S^1 \vee S^1$ is a bouquet of $2$-circles, $H_1(f)=\begin{pmatrix}1 \\ 1 \end{pmatrix}$ 
and $H_1(g)=\begin{pmatrix}1&1 \end{pmatrix}$.
By taking the 1st homology of this filtration, 
we obtain the 1st persistent homology
$$
H_1 (\X ; \Zint)=
\xymatrix{
0\ar[r] & \Zint \ar[r]^{\begin{pmatrix}1 \\ 1 \end{pmatrix}} & \Zint^2 \ar[r]^{\begin{pmatrix}1&1 \end{pmatrix}} & \Zint,
}
$$
with a coefficient ring $\Zint$.
Then $H_1 (\X ; \Zint_2) = H_1(\X ; \Zint) \otimes_\Zint \Zint_2$ has the interval decomposition $I(1,2) \oplus I(2,3)$.
On the other hand, $H_1 (\X ; \R) = H_1 (\X ; \Zint) \otimes_\Zint  \R$ has the interval decomposition  $I(1,3) \oplus I(2,2)$.
Thus, the interval decomposition of the 1st persistent homology of $\X$ depends on the choice of coefficient field.
\begin{figure}[h]
  \centering
  \includegraphics[width=1.0\hsize]{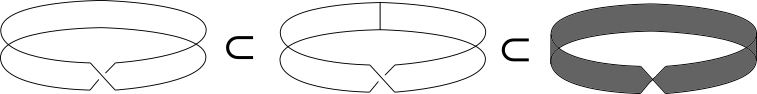} \\
  \caption{Visualization of $\X : \emptyset \to S^1 \xrightarrow{f} S^1 \vee S^1 \xrightarrow{g} S^1$}
\end{figure}
\end{example}

Note that if we consider a bouquet of $p$-circles for a prime $p$, 
then we obtain the 1st persistent homology, which has different decompositions over $\Zint_p$ and $\R$. 

By using Example \ref{exm:bouquet}, we can consider the 1st persistent homology, whose interval decomposition depends on the choice of characteristic $p>0$.

\begin{example} \label{exm:charp}
Let 
$$
M=
\xymatrix{
0 \ar[r] &
\Zint \ar[r]^{\begin{pmatrix}1 \\ 1 \end{pmatrix}} & \Zint^2 \ar[r]^{\begin{pmatrix}1&1 \end{pmatrix}} & \Zint \ar[r]^{\begin{pmatrix}1 \\ 1 \\1 \end{pmatrix}}  & \Zint^3 \ar[r]^{\begin{pmatrix}1 & 1 & 1 \end{pmatrix}} & \Zint
}
$$
be the 1st persistent homology.  
Then $M$ has the following interval decomposition:
$$
M \cong \left\{
\begin{array}{cc}
I(1,2) \oplus I(2,5) \oplus I(4,4)^2     & \text{if } p=2 \\
I(1,4) \oplus I(2,2) \oplus I(4,4) \oplus I(4,5)     & \text{if }  p=3 \\
\end{array}\right.
.
$$
\begin{figure}[h]
  \centering
  \includegraphics[width=1.0\hsize]{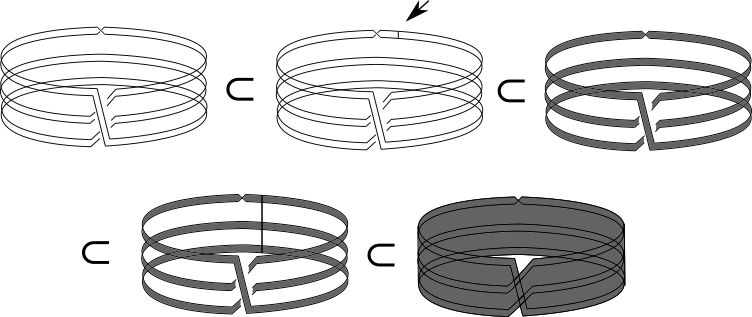} \\
  \caption{Visualization of $\X$ with $M=H_1 (\X)$}
\end{figure}
\end{example}

Other examples are double and triple loop pointclouds.
Figure~\ref{fig:doubletripleloop} (a) shows the double loop pointcloud. The pointcloud is located on the boundary
of a M\"obius strip. We compute the 1st PDs of the double loop pointcloud
with fields $\Zint_2, \Zint_3,$ and $\Zint_5$. The alpha filtration of the pointcloud is used for the computation.
The diagrams are shown in Figure~\ref{fig:doubletripleloop} (b), (c), and (d). Note that  (c) and (d) are the same diagram. The difference between (b) and (c) is only two birth-death pairs.
Figure~\ref{fig:doubletripleloop} (e) shows the triple loop pointcloud and (f), (g), and (h) show
the 1st PDs of the triple loop pointcloud with fields $\Zint_2, \Zint_3,$ and $\Zint_5$.
To be expected, (f) and (h) are the same diagram and (g) is different from (f) and (h).

\begin{figure}[h]
  \centering
  \includegraphics[width=1.0\hsize]{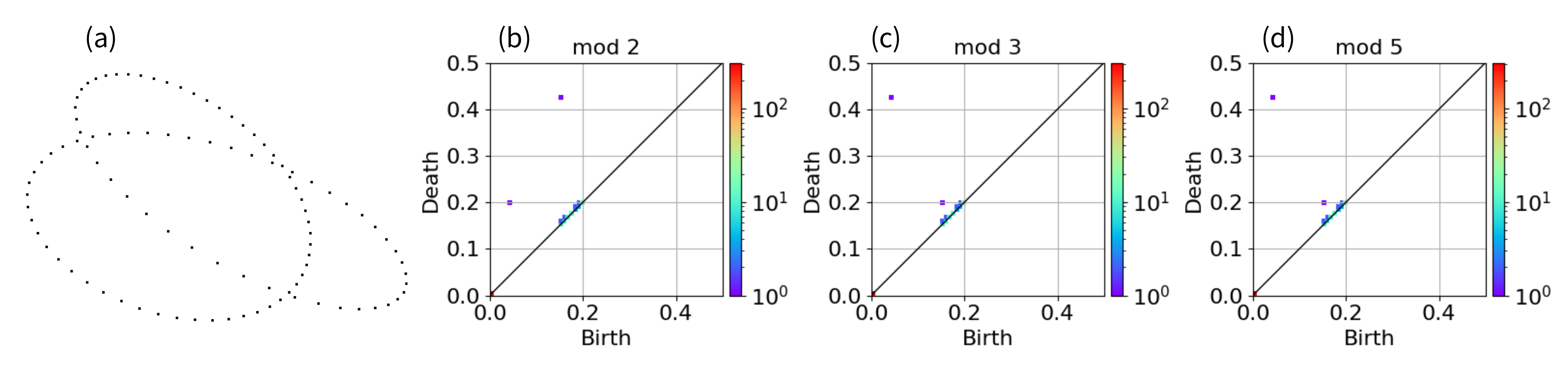} \\
  \includegraphics[width=1.0\hsize]{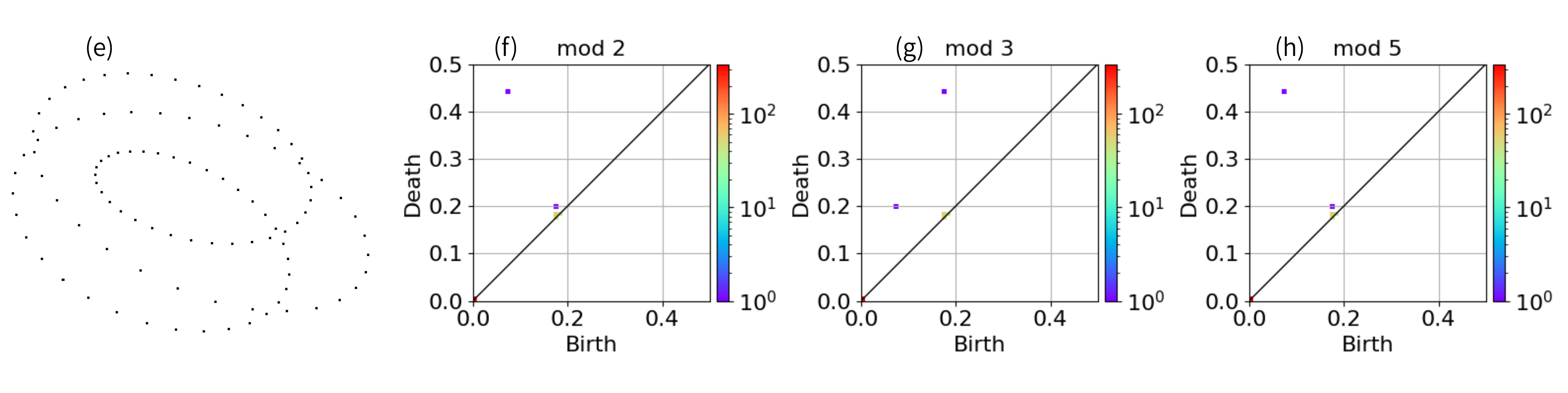} 
  \caption{1st PDs with various fields for double and triple loop pointclouds.
    (a) A double loop point cloud. (b) The PD of the double loop with $\Zint_2$.
    (c) The PD of the double loop with $\Zint_3$. (d) The PD of the double loop with $\Zint_5$.
    (e) A triple loop point cloud. (f) The PD of the triple loop with $\Zint_2$.
    (g) The PD of the triple loop with $\Zint_3$. (h) The PD of the triple loop with $\Zint_5$
  }
  \label{fig:doubletripleloop}
\end{figure}

\section{Proof of Theorem \ref{thm:allfree}}\label{sec:proof-allfree}

The following proposition is required to prove the theorem.

\begin{proposition}\label{prop:freefree}
  If $H_q(X_n, X_m; \Zint)$
  is free,
  $\coker(\phi_m^n: H_q(X_m; \Zint) \to H_q(X_n; \Zint))$ is also free.
\end{proposition}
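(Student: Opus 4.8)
The plan is to realize $\coker\phi_m^n$ as a subgroup of $H_q(X_n, X_m; \Zint)$ and then invoke the elementary fact that every subgroup of a free abelian group is again free abelian. So the proposition will follow at once from exactness of the long exact sequence of the pair, with no substantial computation.

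First I would write down the relevant segment of the long exact sequence of the pair $(X_n, X_m)$ with integer coefficients,
\begin{equation*}
H_q(X_m; \Zint) \xrightarrow{\phi_m^n} H_q(X_n; \Zint) \xrightarrow{j_*} H_q(X_n, X_m; \Zint),
\end{equation*}
where $j_*$ is the map induced by the inclusion of pairs $(X_n, \emptyset) \hookrightarrow (X_n, X_m)$. Exactness at $H_q(X_n; \Zint)$ gives $\im\phi_m^n = \ker j_*$, so the first isomorphism theorem yields
\begin{equation*}
\coker\phi_m^n = H_q(X_n; \Zint)/\im\phi_m^n = H_q(X_n; \Zint)/\ker j_* \;\cong\; \im j_* \;\subseteq\; H_q(X_n, X_m; \Zint).
\end{equation*}
Then, since $H_q(X_n, X_m; \Zint)$ is free by hypothesis and any subgroup of a free abelian group is free, $\im j_*$ is free, and hence so is $\coker\phi_m^n$.

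I do not expect any real obstacle: the statement is an immediate consequence of exactness of the homology long exact sequence together with the structure of subgroups of free abelian groups. The only point worth noting is that the assumed finiteness of the filtration makes every group in sight finitely generated, so one could alternatively argue via the structure theorem for finitely generated abelian groups; but this refinement is not needed, and the argument above works for subgroups of arbitrary free abelian groups.
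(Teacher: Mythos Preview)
Your proof is correct and essentially identical to the paper's: both use exactness of the long exact sequence of the pair to identify $\coker\phi_m^n$ with the image of the map $H_q(X_n;\Zint)\to H_q(X_n,X_m;\Zint)$, and then conclude by the fact that subgroups of free $\Zint$-modules are free. The only differences are cosmetic (the paper calls your $j_*$ by $\psi_m^n$) and your closing remark about finite generation being unnecessary, which is a harmless addendum.
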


\begin{proof}
  We have the following long exact sequence~\cite[Theorem 2.16, pp. 117]{AT}  for the pair $(X_n, X_m)$:
  \begin{align}
    \cdots \to H_q(X_m; \Zint) \xrightarrow{\phi_m^n} H_q(X_n; \Zint)
    \xrightarrow{\psi_m^n} H_q(X_n, X_m; \Zint) \to \cdots,
  \end{align}
  where $\psi_m^n$ is induced by canonical projection.
  Therefore, we have the following relationship between
  $\coker(\phi_m^n)$ and $H_q(X_n, X_m; \Zint)$.
  \begin{align}
    \coker(\phi_m^n) = H_q(X_n; \Zint) / \im \phi_m^n = H_q(X_n; \Zint) / \ker \psi_m^n
    \simeq \im \psi_m^n \subset H_q(X_n, X_m; \Zint)
  \end{align}
  
  To complete the proof, we show that $\im \psi_m^n$ is free,  and this derives from the
  following well-known theorem.
  \begin{theoremext}\label{thmext:submodule}
    Any sub-module of a free $\Zint$-module is also free.
  \end{theoremext}
  
\end{proof}

From the assumption of the theorem,  $H_q(X_m; \Zint) = H_q(X_m, X_0; \Zint)$ is free for all $m$. Hence,
$\phi_m^n: H_q(X_m; \Zint) \to H_q(X_n; \Zint)$ is a homomorphism between two finitely generated
free $\Zint$-modules and the map has a Smith normal form (SNF).
That is, by taking an appropriate basis, $\phi_m^n$ can
be represented by the following $\Zint$ matrix:
\begin{align}\label{eq:snf}
\begin{pmatrix}
  \alpha_1 & 0        & 0      & \cdots & 0        &  \\
  0        & \alpha_2 & 0      & \cdots & 0        &  \\
  0        & 0        & \ddots &        &          &\ \   O \ \ \\
  \vdots   & \vdots   &        & \ddots &          &  \\
  0        & 0        &        &        & \alpha_K & \\
  \\
  &          &    O   &        &          & \ \   O \ \ \\
  \\
\end{pmatrix},
\end{align}
where $0 < \alpha_k \in \Zint$ and $\alpha_k \mid \alpha_{k+1}$ for any $k$. Then from Theorem~\ref{thm:ucm}, the following relationship holds:
\begin{equation}
    \begin{aligned}
    \beta_m^n(\Bbbk) & = \rank (\phi_m^n: H_q(X_m; \Bbbk) \to H_q(X_n; \Bbbk)) \\
    & = \rank (\phi_m^n \otimes \mathrm{id}_\Bbbk: H_q(X_m; \Zint) \otimes \Bbbk \to H_q(X_n; \Zint) \otimes \Bbbk).
    \end{aligned}\label{eq:beta-mn}
\end{equation}
From \eqref{eq:snf} and \eqref{eq:beta-mn}, we know that
$\beta_m^n(\Bbbk)$ is independent of the choice of $\Bbbk$
if and only if $\alpha_1 = \cdots = \alpha_K = 1$.
From SNF, we also have the following:
\begin{align*}
  \coker \phi_m^n \simeq &\bigoplus_{k=k_0}^K(\Zint / \alpha_k\Zint) \oplus \Zint^{L - K}, \\
  \text{where } & k_0 = 1 + \max\{i \mid \alpha_i = 1\}, \\
  & L = \rank H_q(X_n; \Zint).
\end{align*}
This means that $\alpha_1 = \cdots = \alpha_K = 1$ if and only if
$\coker(\phi_m^n)$ is free and the condition is shown
from Prop~\ref{prop:freefree} and the assumption of the theorem.

\subsection{Proof of Corollary~\ref{cor:M_munus_1}}\label{subsec:M_minus_1}
Standard homology theory~\cite[Corollary 3.46, pp. 256]{AT} 
shows that $H_{M-1}(X_n; \Zint)$ and 
$H_{M-2}(X_n; \Zint)$ are free under the condition of this corollary.
The above corollary is shown by using Alexander duality~\cite[Corollary 3.45, pp. 255]{AT} .
$H_{M-1}(X_n, X_m; \Zint)$ is also free since this relative homology group
is isomorphic to a 0th relative cohomology group because of Alexander duality.
Therefore, from Theorem~\ref{thm:allfree}, this can be applicable to the filtration.

\section{Proof of Theorem~\ref{thm:allfree2}}\label{sec:proof-allfree2}

The proof is similar to that of Theorem~\ref{thm:allfree}, but slightly more complex.
We prepare the following proposition.
\begin{proposition}\label{prop:quotfree}
  $H_q(X_n, X_m; \Zint)$ is free if
  $\coker(\phi_m^n: H_q(X_m; \Zint) \to H_q(X_n; \Zint))$ and
  $H_{q-1}(X_m; \Zint)$ are free.
\end{proposition}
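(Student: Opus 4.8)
The plan is to mimic the structure of the proof of Proposition~\ref{prop:freefree} but run the long exact sequence of the pair $(X_n, X_m)$ one step further. First I would write down the relevant fragment of the long exact sequence in $\Zint$-coefficients:
\begin{align*}
  H_q(X_m; \Zint) \xrightarrow{\phi_m^n} H_q(X_n; \Zint)
  \xrightarrow{\psi_m^n} H_q(X_n, X_m; \Zint)
  \xrightarrow{\partial} H_{q-1}(X_m; \Zint)
  \xrightarrow{\phi_m^n} H_{q-1}(X_n; \Zint).
\end{align*}
From exactness at $H_q(X_n, X_m; \Zint)$, the map $\psi_m^n$ induces an injection $\coker(\phi_m^n) = H_q(X_n;\Zint)/\im\phi_m^n \hookrightarrow H_q(X_n, X_m; \Zint)$ with image $\im\psi_m^n = \ker\partial$. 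Exactness at $H_q(X_n, X_m; \Zint)$ then gives a short exact sequence
\begin{align*}
  0 \to \coker(\phi_m^n) \to H_q(X_n, X_m; \Zint) \to \im\partial \to 0,
\end{align*}
where $\im\partial \subset H_{q-1}(X_m; \Zint)$.

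The next step is to observe that $\im\partial$, being a submodule of the free $\Zint$-module $H_{q-1}(X_m; \Zint)$, is free by Theorem~\ref{thmext:submodule}. A short exact sequence of $\Zint$-modules whose right-hand term is free splits, so $H_q(X_n, X_m; \Zint) \simeq \coker(\phi_m^n) \oplus \im\partial$. Since both summands are free — the first by hypothesis, the second by the submodule argument — the direct sum is free, which is exactly the claim. This essentially completes the proof.

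I do not anticipate a serious obstacle; the only point requiring a little care is the direction of the splitting. One should make sure to invoke that a s.e.s. $0 \to A \to B \to C \to 0$ with $C$ projective (here free, hence projective over the PID $\Zint$) splits, so that freeness of $A$ and $C$ forces freeness of $B$; the hypothesis that $H_{q-1}(X_m;\Zint)$ is free is used precisely to guarantee $\im\partial$ is free. Alternatively, if one wanted to avoid invoking projectivity, one could note that $B$ is finitely generated over the PID $\Zint$, so $B \simeq \Zint^r \oplus (\text{torsion})$, and the torsion subgroup of $B$ injects into the torsion subgroup of $C$ via the surjection $B \to C$ — wait, that is the wrong direction — so instead argue that the torsion subgroup $T(B)$ maps to $0$ in $C$ (as $C$ is torsion-free), hence $T(B) \subset A$, but $A$ is torsion-free, so $T(B) = 0$ and $B$ is free. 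Either route works; I would present the splitting argument as it is cleaner and parallels the style of Section~\ref{sec:proof-allfree}.
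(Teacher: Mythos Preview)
Your proof is correct and follows essentially the same route as the paper: both extract from the long exact sequence the short exact sequence $0 \to \coker(\phi_m^n) \to H_q(X_n, X_m; \Zint) \to \im\partial \to 0$, observe that $\im\partial$ is free as a submodule of the free module $H_{q-1}(X_m;\Zint)$, and then conclude by the standard fact that an extension of a free $\Zint$-module by a free $\Zint$-module is free. The only cosmetic difference is that the paper quotes this last fact as a black-box theorem, whereas you supply the underlying reason (the sequence splits since the right-hand term is projective); your alternative torsion-subgroup argument is also fine.
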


\begin{proof}
  From the long exact sequence for the pair $(X_n, X_m)$,
  \begin{equation}
    \begin{aligned}
      \cdots \to H_q(X_m; \Zint) \xrightarrow{\phi_m^n} H_q(X_n; \Zint)
      \xrightarrow{\psi_m^n} H_q(X_n, X_m; \Zint) \xrightarrow{\partial}
      H_{q-1}(X_m; \Zint) \to \cdots,
    \end{aligned}
  \end{equation}
  we have the
  following facts:
  \begin{equation}
    \begin{aligned}
      \coker(\phi_m^n) & \simeq \im \psi_m^n \subset H_q(X_n, X_m; \Zint), \\
      \im \partial & \simeq H_q(X_n, X_m; \Zint) / \im \psi_m^n .
    \end{aligned}
  \end{equation}
  $\im \partial$ is free since $H_{q-1}(X_m)$ is free.
  We complete the proof
  by the following theorem from standard algebra.
  \begin{theoremext}
    Let $M$ be a module over $\Zint$ and $N$ be a sub-module of $M$.
    $M$ is finitely generated and free if $N$ and $M/N$ are
    finitely generated and free.
  \end{theoremext}
\end{proof}

\subsection{Proof of Theorem~\ref{thm:allfree2}}
We assume that $D_q(\X; \Bbbk)$ is independent of the choice of
$\Bbbk$. Then from Lemma~\ref{lem:pbn},
$\beta_m^n(\Bbbk)$ is independent of $\Bbbk$ for any $m$ and $n$.
Especially, for any $n$ and $q$,
$\beta_n^n(\Bbbk) = \dim H_q(X_n; \Bbbk)$ is independent of
$\Bbbk$ and therefore $H_q(X_n; \Zint)$ is free
due to the universal coefficient theorem
since $H_{q-1}(X_n; \Zint)$ is free.
Then $\phi_m^n$ has SNF and $\coker \phi_m^n$ is free
because of the discussion in the proof of Theorem~\ref{thm:allfree}.
From the above fact and Proposition~\ref{prop:quotfree}, we conclude that
$H_q(X_n, X_m; \Zint)$ is free.

\subsection{Proof of Corollary~\ref{cor:allfree3}}

From Theorem~\ref{thm:allfree}, it is straightforward to show that 
$D_q(\X; \Bbbk)$ is independent of the choice of $\Bbbk$ for all $q = 0, \ldots, M$ if
  $H_q(X_n, X_m; \Zint)$ are free for any $0\leq m < n \in T$, and $q = 0, \ldots, M$.

We can show the converse by induction on $q$.
For $q = 0$, it is trivial that $H_0(X_n, X_m; \Zint)$ is free and
the induction process proceeds by using Theorem~\ref{thm:allfree2}.

\section{Algorithm to determine the dependency of $D_q(\X; \Bbbk)$ on $\Bbbk$}\label{sec:algorithm}

In this section, we explore an algorithm to judge the existence of non-zero
torsion and prove Theorem~\ref{thm:allfreealg}. See Algorithm~\ref{alg:allfree}. Now we prove the following facts.
\begin{itemize}
\item If the algorithm returns ``independent'',
  the given filtration satisfies the condition of Corollary~\ref{cor:allfree3}.
  Therefore, $D_q(\X; \Bbbk)$ is independent of the choice of $\Bbbk$.
\item If the algorithm returns ``dependent'',
  the given filtration does not satisfy the condition of Corollary~\ref{cor:allfree3}.
  Therefore, $D_q(\X; \Bbbk)$ depends on the choice of $\Bbbk$.
\end{itemize}

\begin{algorithm}[ht]
  \caption{Algorithm to determine the dependency of $D_q(\X; \Bbbk)$ on $\Bbbk$}\label{alg:allfree}
  \begin{algorithmic}
    \State let $B$ be the matrix representation of the boundary operator 
    \For{$j=1, \ldots, N$} \Comment (OUTERLOOP)
      \While{there exists $i < j$ with $\low_B(i) = \low_B(j) \not = -\infty$} \Comment (INNERLOOP)
         \State let $s = - B_{\low_B(j), j} / B_{\low_B(i), i}$ \Comment (A)
         \State add $s \times (\textrm{column}\  i)$ to column $j$ of $B$ \Comment left-to-right reduction
      \EndWhile
      \If{$L(j) \not = -\infty$ and $B_{L_{B}(j), j} \not \in \{\pm 1\}$} \Comment (B)
        \State print $|B_{L_B(j), j}|$
        \State \Return ``dependent''
      \EndIf
    \EndFor
    \State \Return ``independent''
  \end{algorithmic}
\end{algorithm}

In algorithm \ref{alg:allfree}, $L_B$ means \eqref{eq:LB}.
We remark that $B_{L_{B}(i), i}$ in this algorithm is always $\pm 1$ at (A), so
the division at (A) always applies. This is because the condition is
checked at (B).

For the proof, we use ideas presented in the Appendix~\ref{sec:proof-alg1}.
We use the following notation. 

\begin{notation}\label{notation:allfree}
  \ 
  \begin{itemize}
  \item $R$ is $\Zint$ or a field
  \item $C(X_k; R) := \bigoplus_{q=0}^{\dim(\X)} C_q(X_k; R)$
  \item $\partial_k: C(X_k; R) \to C(X_k; R) = \bigoplus_{q=0}^{\dim(\X)} (\partial_k^{(q)}:C_q(X_k; R) \to C_{q-1}(X_k; R))$ 
  \item $Z(X_k; R):= \ker \partial_k  = \bigoplus_{q=0}^{\dim(\X)} \ker \partial_k^{(q)}$
  \item $B(X_k; R):=\mathrm{im} \partial_{k} = \bigoplus_{q=0}^{\dim(\X)} \mathrm{im} \partial_{k}^{(q)}$
  \item $H(X_k; R):= Z(X_k; R) / B(X_k; R) = \bigoplus_{q=0}^{\dim(\X)} H_q(X_k; R)$ 
  \item $[\cdot]_k$ is a homology class in $H(X_k; R)$
  \item $B_0$ is the boundary matrix of $\partial = \partial_N$
    with respect to the basis $\{\sigma_1, \ldots, \sigma_N\}$
  \end{itemize}
\end{notation}

We check whether
$H(X_n, X_m; \Zint) = \bigoplus_{q=0}^{M} H_q(X_n, X_m; \Zint)$
has a non-zero torsion subgroup $T(H(X_n, X_m; \Zint))=  \bigoplus_{q=0}^{M} T(H_q(X_n, X_m; \Zint))$ for every $m < n$.
We assume that $\dim(\X) \leq M + 1$ by removing all simplices whose dimensions
exceed $M + 1$.

First, we consider the following fact.
\begin{fact}\label{fact:algfree-finite-steps}
  Algorithm~\ref{alg:allfree} always terminates in finite steps.
\end{fact}
Fact~\ref{fact:algfree-finite-steps} can be easily shown since,
in the while loop (INNERLOOP), $L_B(j)$ is strictly monotonically decreasing
and finally $L_B(j)$ becomes $-\infty$ or distinct from $\{L_B(i) \mid i < j\}$.

The following lemma is important for the proof. The lemma is proved at the end of this section.

\begin{lemma}\label{lem:outerloop}
  Let $n$ be an integer satisfying $ 1 \leq n \leq N+1$.
  If (OUTERLOOP) in Algorithm~\ref{alg:allfree} is done for $j = 1, \ldots, n-1$,
  we can show the existence of 
  a basis of $C(X_{n-1})$,
  $\{\tilde{\sigma}_1, \ldots, \tilde{\sigma}_{n-1}\}$, and
  a decomposition of $\{1, \ldots, n-1\}$, $D_n \sqcup D_n' \sqcup E_n$,
  such that the following conditions hold.
  \begin{enumerate}[(a)]
  \item\label{enum:basis} $\{\tilde{\sigma}_1,\ldots, \tilde{\sigma}_k\}$ is a basis of $C(X_k; \Zint)$
    for any $1 \leq k < n$.
  \item $\partial \tilde{\sigma}_j \not = 0$ for $j \in D_n$.
  \item $L_{\hat{B}}$ is a bijection from $D_n$ to $D'_n$ and
    $\partial \tilde{\sigma}_j = \tilde{\sigma}_{L_{\hat{B}}(j)}$ for any $j \in D_n$.
  \item $\partial \tilde{\sigma}_i = 0$ for $i \in D'_n \sqcup E_n$.
  \end{enumerate}
  
  Additionally, if condition (B) in Algorithm~\ref{alg:allfree} is true at $j = n$
  for $ 1 \leq n \leq N$,
  $L_{\hat{B}}(n)$ is not $-\infty$ and
  there exists $\hat{\sigma}_n \in C(X_n; \Zint)$ such that the
  following conditions hold:
  \begin{equation}\label{eq:reminder}
    \begin{aligned}
      C(X_n; \Zint) & = C(X_{n-1}; \Zint) \oplus \left< \hat{\sigma}_n \right>, \\
      \partial \hat{\sigma}_n & = \hat{B}_{L_{\hat{B}}(n), n} \tilde{\sigma}_{L_{\hat{B}}(n)} +
      \sum_{1 \leq i < L_{\hat{B}}(n)} \beta_i \tilde{\sigma}_i,  \\
      \partial \sigma_{L_{\hat{B}}(n)} &= 0.
    \end{aligned}
  \end{equation}
  From the above conditions we can show $L_{\hat{B}}(n) \in E_n$.
\end{lemma}

First we prove $T(H_q(X_n, X_m; \Zint)) = 0$ for every $m < n$
when the algorithm returns ``independent''. When the algorithm returns ``independent'',
we can apply Lemma~\ref{lem:outerloop} for $n=N+1$.
Therefore, we can find a basis of
$C(X_N; \Zint)$, $\{\tilde{\sigma}_1, \ldots, \tilde{\sigma}_N\}$, satisfying conditions
(a), (b), (c), and (d) in Lemma~\ref{lem:outerloop}. We can explicitly write the bases of
$C(X_m; \Zint)$ and $C(X_n, X_m; \Zint) $ as follows:
\begin{equation}
  \begin{aligned}
    &\{\tilde{\sigma}_{1}, \ldots, \tilde{\sigma}_{m} \}
    \text{ is a basis of } C(X_m; \Zint), \\
    &\{\tilde{\sigma}_{m+1} + C(X_m;\Zint), \ldots, \tilde{\sigma}_{n} + C(X_m;\Zint) \}
    \text{ is a basis of } C(X_n, X_m; \Zint). \\
  \end{aligned}
\end{equation}
Let $\partial_{n,m}: C(X_n, X_m; \Zint) \to C(X_n, X_m; \Zint) $ be
the boundary operator on relative chain complexes. 
From Condition~\ref{cond:phbasis} (b), (c), and (d), 
$\ker \partial_{n,m}$ and $\im \partial_{n,m}$ are both $\Zint$ free modules
and the bases are
\begin{equation}
  \begin{aligned}
    &\{\tilde{\sigma}_k + C(X_m; \Zint) \mid  m < k \leq n, k \in D' \sqcup E\} \\
    \sqcup &\{\tilde{\sigma}_k + C(X_m; \Zint) \mid  m < k \leq n, k \in D, L_{\hat{B}}(k) < m\},
  \end{aligned}
\end{equation}
and
\begin{equation}
  \begin{aligned}
    &\{\tilde{\sigma}_{L(k)} + C(X_m; \Zint) \mid  m < k \leq n, k \in D, m < L_{\hat{B}}(k)\} \\
    =&\{\tilde{\sigma}_{k} + C(X_m; \Zint) \mid  m < k \leq n, k \in D', L_{\hat{B}}^{-1}(k) \leq n \}.
  \end{aligned}
\end{equation}

Therefore, the basis of $H(X_n, X_m; \Zint) = \ker \partial_{n,m} / \im \partial_{n,m}$
can be written as follows:
\begin{equation}
  \begin{aligned}
    & \{[\tilde{\sigma}_k + C(X_m; \Zint)] \mid
    m < k \leq n, k \in E \} \\
    \sqcup & \{[\tilde{\sigma}_k + C(X_m; \Zint)] \mid
    m < k \leq n, k \in D', n < L_{\hat{B}}^{-1}(k) \},
  \end{aligned}
\end{equation}
where $[z + C(X_m; \Zint)]$ is the homology class of $z + C(X_m; \Zint)$
in $H(X_n, X_m; \Zint)$.
Therefore, $H(X_n, X_m; \Zint)$ is a free $\Zint$-module and
we complete the proof for the ``independent'' case.

Next we show that there is a pair $(m, n)$ such that
$T(H(X_n, X_m; \Zint)) \not = 0$ if the algorithm returns ``dependent''.
In that case, condition (B) in the algorithm is true for one $j$, so
let $n$ be that $j$. We can apply Lemma~\ref{lem:outerloop} for that $n$.
In this case the conditions in \eqref{eq:reminder} hold.

Now let $m := L_{\hat{B}}(n) - 1$.
From (a)-(d) and \eqref{eq:reminder},
the bases of $\ker \partial_{n,m}$ and $\im \partial_{n,m}$ can be explicitly
written as follows:
\begin{equation}
  \begin{aligned}
    &\{\tilde{\sigma}_k + C(X_m; \Zint) \mid  m < k < n, k \in D'_n \sqcup E_n\} \\
    \sqcup &\{\tilde{\sigma}_k + C(X_m; \Zint) \mid  m < k < n, k \in D_n, L_{\hat{B}}(k) < m\},
  \end{aligned}
\end{equation}
and
\begin{equation}
  \begin{aligned}
    &\{\tilde{\sigma}_{L_{\hat{B}}(k)} + C(X_m; \Zint) \mid  m < k < n, k \in D_n, m < L_{\hat{B}}(k)\} \\
    \sqcup & \{ p \tilde{\sigma}_{m+1} + C(X_m; \Zint) \},
  \end{aligned}
\end{equation}
where $p = |\hat{B}_{m+1, n}|$. Using $m + 1 \in E_n$, finally we have
\begin{equation}
  \begin{aligned}
    H(X_n, X_m; \Zint) =&
    \left< [\tilde{\sigma}_{k} + C(X_m; \Zint)] \mid
      m < k < n, k \in D'_n, L_{\hat{B}}^{-1}(k) \leq n \right> \\
    \oplus & \left< [\tilde{\sigma}_{k} + C(X_m; \Zint)] \mid
      m < k < n, k \in E_n\backslash\{ m + 1\}  \right> \\
    \oplus& \left< [\tilde{\sigma}_{m+1} + C(X_m; \Zint)] \right>
  \end{aligned}
\end{equation}
and
\begin{equation}
  \left< [\tilde{\sigma}_{m+1} + C(X_m; \Zint)] \right> \simeq \Zint_p.
\end{equation}
The proof for the ``dependent'' case is completed.

Here, when the algorithm returns ``dependent'', the number $p$ is displayed. This facilitates understanding the dependency
of $D(\X; \Zint_{p'})$ to a prime $p'$ which is a divisor of $p$.

\subsection{Proof of Lemma~\ref{lem:outerloop}}
\begin{proof}
  Let $\hat{B}$ be matrix $B$ in Algorithm~\ref{alg:allfree}
  when (OUTERLOOP) is executed for $j = 1, \ldots, n-1$.
  Since the left-to-right reduction in Algorithm~\ref{alg:allfree} is equivalent to the
  multiplication of the following invertible matrix $R_{ij}(s)$ from right,
  \begin{equation}
    R_{ij}(s) = 
    \begin{blockarray}{cccccccc}
      &&&&j&&& \\
      \begin{block}{(ccccccc)c}
        1 &        &   &        &   &        &&\\ 
        & \ddots &   &        &   &        &&\\ 
        &        & 1 &        & s &        && i\\ 
        &        &   & \ddots &   &        &&\\ 
        &        &   &        & 1 &        &&\\ 
        &        &   &        &   & \ddots &&\\ 
        &        &   &        &   &        & 1 &\\
      \end{block}
    \end{blockarray},
  \end{equation}
  $\hat{B}$ can be written as
  \begin{equation}
    \hat{B} = B_0 U,
  \end{equation}
  where $U$ is an upper triangular matrix whose
  diagonal elements are all 1. Since $U^{-1}$ is also an upper triangular
  matrix whose diagonal elements are 1,
  we can show the following fact by elemental matrix calculus.

  \begin{fact}
    For all $1 \leq j \leq N$, 
    $L_{\hat{B}}(j) = L_{U^{-1}\hat{B}}(j)$.
  \end{fact}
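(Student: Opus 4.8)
The plan is to exploit the structure, already recorded above, that $U^{-1}$ is upper triangular with every diagonal entry equal to $1$, together with the elementary observation that left multiplication by such a matrix cannot move the lowest nonzero entry of any column. Recall that $L_B(j)$ picks out the \emph{largest} row index $i$ with $B_{ij} \neq 0$ (and equals $-\infty$ for a zero column), so "lowest" here means "largest row index", and "rows below a given row" means "rows with strictly larger index".

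First I would make explicit what left multiplication by $U^{-1}$ does to each row. Since $(U^{-1})_{ik} = 0$ for $k < i$ and $(U^{-1})_{ii} = 1$, we get, entry by entry,
\[
  (U^{-1}\hat{B})_{ij} = \hat{B}_{ij} + \sum_{k>i}(U^{-1})_{ik}\,\hat{B}_{kj}
  \qquad \text{for all } i,j;
\]
that is, the $i$th row of $U^{-1}\hat{B}$ is the $i$th row of $\hat{B}$ plus a linear combination of the rows of $\hat{B}$ strictly below it.

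Now fix a column index $j$ and set $\ell := L_{\hat{B}}(j)$, so $\hat{B}_{kj} = 0$ for every $k > \ell$. If $i > \ell$, the right-hand side of the displayed formula involves only $\hat{B}_{ij}$ and the entries $\hat{B}_{kj}$ with $k > i > \ell$, all of which vanish, so $(U^{-1}\hat{B})_{ij} = 0$; hence $L_{U^{-1}\hat{B}}(j) \le \ell$. If $i = \ell$, the correction terms again involve only $\hat{B}_{kj}$ with $k > \ell$, so $(U^{-1}\hat{B})_{\ell j} = \hat{B}_{\ell j} \neq 0$. Combining the two gives $L_{U^{-1}\hat{B}}(j) = \ell = L_{\hat{B}}(j)$. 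In the degenerate case where column $j$ of $\hat{B}$ is identically zero, the same formula yields $(U^{-1}\hat{B})_{ij} = 0$ for every $i$, so $L_{U^{-1}\hat{B}}(j) = -\infty = L_{\hat{B}}(j)$, which is again what is claimed.

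I do not anticipate a genuine obstacle; the proof is a short index computation. The only thing that needs care is to keep straight that we are multiplying on the \emph{left}, so it is rows that get combined, and that an upper-triangular factor with unit diagonal adds to each row only a combination of the rows strictly below it — precisely the direction that leaves the pivot row $L_{\hat{B}}(j)$ of every column undisturbed. (Had the factor been lower triangular the conclusion would in general fail, so the triangularity of $U$ is genuinely used here.)
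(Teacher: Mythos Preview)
Your argument is correct and is precisely the ``elemental matrix calculus'' the paper invokes without writing out: the paper states the fact and remarks only that it follows because $U^{-1}$ is upper triangular with unit diagonal, leaving the verification implicit. Your entrywise computation makes that verification explicit and matches the intended approach.
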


  Now we define $\tilde{B}$ and $L(j)$ as follows for the proof.
  \begin{equation}
    \begin{aligned}
      \tilde{B} &= U^{-1}\hat{B} = U^{-1}B_0U, \\
      L(j) & = L_{\hat{B}}(j) = L_{U^{-1}\hat{B}}(j) = L_{\tilde{B}}(j).
    \end{aligned}
  \end{equation}

  We consider the matrix $\tilde{B} = U^{-1}B_0U$. Let
  $\{\hat{\sigma}_1, \ldots, \hat{\sigma}_N\}$ be the basis of 
  $C(X_N; \Zint)$ given by the change of coordinate matrix $U$. Since $U$ is
  upper triangular, the following relation holds for every $1 \leq k < n$.
  \begin{equation}\label{eq:same-space}
    C(X_k; \Zint) = 
    \left< \sigma_1, \ldots, \sigma_k \right> =
    \left< \hat{\sigma}_1, \ldots, \hat{\sigma}_k \right>.
  \end{equation}
  From the terminating condition of the while loop (INNERLOOP) in Algorithm~\ref{alg:pd}, we also
  have the following facts.
  \begin{fact}\label{fact:distinct}
    If $1 \leq i, j < n$ satisfy $i \not = j$, $L(i) \not = -\infty$, and
    $L(j) \not = -\infty$, then
    $L(i) \not = L(j)$.
  \end{fact}

  Now we consider a column of $\tilde{B}$ which is non-zero. Let $j$ be the index of
  the column. From the definition of $L$, we have the following relationship:
  \begin{equation}
    \begin{aligned}
      L(j) & \not= -\infty, \\
      \tilde{B}_{L(j),j} & \not = 0, \\
      \tilde{B}_{k,j} & = 0 \text{ for any } k > L(j).
    \end{aligned}
  \end{equation}
  Hence, $\partial \hat{\sigma}_j$ can be written as follows:
  \begin{equation}\label{eq:boundary-op-hat}
    \partial \hat{\sigma}_j = \tilde{B}_{L(j), j} \hat{\sigma}_{L(j)} +
    \sum_{1 \leq i < L(j)} \tilde{B}_{ij} \hat{\sigma}_i.
  \end{equation}
  Now we show the following claim.
  \begin{claim}\label{claim:zero_lowj}
    The $\low(j)$-th column of $\tilde{B}$ is zero.
  \end{claim}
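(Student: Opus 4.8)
The plan is to show that $\partial \hat{\sigma}_{\low(j)} = 0$, i.e. that $\hat{\sigma}_{\low(j)} \in Z(X_N)$, which is exactly the statement that column $\low(j)$ of $\tilde{B}$ is zero. The key observation is that $\low(j)$ is the row index of a ``pivot'' produced by the reduction, and pivot rows should correspond to cycles because $\hat{\sigma}_{\low(j)}$ appears as the leading term of a boundary $\partial \hat{\sigma}_j$, so its own boundary is forced to vanish by $\partial^2 = 0$.

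More precisely, first I would apply $\partial$ to both sides of \eqref{eq:boundary-op-hat} and use $\partial^2 = 0$ to get
\begin{equation*}
  0 = \partial^2 \hat{\sigma}_j = \tilde{B}_{\low(j),j}\, \partial \hat{\sigma}_{\low(j)} + \sum_{1 \leq i < \low(j)} \tilde{B}_{ij}\, \partial \hat{\sigma}_i,
\end{equation*}
so that $\tilde{B}_{\low(j),j}\, \partial\hat{\sigma}_{\low(j)} = - \sum_{i < \low(j)} \tilde{B}_{ij}\,\partial\hat{\sigma}_i$. Each $\partial \hat{\sigma}_i$ for $i < \low(j)$ lies in $C(X_{i-1}) \subset C(X_{\low(j)-2})$ by \eqref{eq:same-space}, hence is a linear combination of $\hat{\sigma}_1, \ldots, \hat{\sigma}_{\low(j)-2}$; in particular the coefficient of $\hat{\sigma}_{\low(j)}$ and of $\hat{\sigma}_{\low(j)-1}$ in the right-hand side is zero, so the coefficient of $\hat{\sigma}_{\low(j)-1}$ in $\partial \hat{\sigma}_{\low(j)}$ is forced to be zero after dividing by the nonzero scalar $\tilde{B}_{\low(j),j}$. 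Iterating this bookkeeping (or arguing directly on the leading term) shows $\partial \hat{\sigma}_{\low(j)}$ cannot have $\hat{\sigma}_{\low(j)}$ as a term either, but more cleanly: suppose $\partial\hat{\sigma}_{\low(j)} \neq 0$, so $\low(\low(j)) \neq -\infty$. Then $\low(j)$ is a column with nonzero leading entry in row $\low(\low(j)) < \low(j)$, and the reduction of column $\low(j)$ has already terminated (since $\low(j) < j$ and we process left to right), so $\low(\low(j))$ is a pivot distinct from all others by Fact~\ref{fact:distinct}.

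The clean way to finish is: from \eqref{eq:boundary-op-hat} written for column $j$ and for column $\low(j)$ (assuming the latter is nonzero), compute the coefficient of $\hat{\sigma}_{\low(\low(j))}$ in $\partial^2\hat{\sigma}_j = 0$. Only the term $\tilde{B}_{\low(j),j}\partial\hat{\sigma}_{\low(j)}$ can contribute $\hat{\sigma}_{\low(\low(j))}$ with a nonzero coefficient, because every other $\partial\hat{\sigma}_i$ with $i < \low(j)$ expands in $\hat{\sigma}_1,\dots,\hat{\sigma}_{i-1}$ and by Fact~\ref{fact:distinct} no such $i$ has $\low(i) = \low(\low(j))$; hence the $\hat{\sigma}_{\low(\low(j))}$-coefficient of $\partial^2\hat{\sigma}_j$ equals $\tilde{B}_{\low(j),j}\cdot\tilde{B}_{\low(\low(j)),\low(j)} \neq 0$, contradicting $\partial^2 = 0$. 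Therefore $\partial\hat{\sigma}_{\low(j)} = 0$, i.e. column $\low(j)$ of $\tilde{B}$ is zero. I expect the main obstacle to be making the ``no other term contributes $\hat{\sigma}_{\low(\low(j))}$'' step airtight — this is where Fact~\ref{fact:distinct} and the containment \eqref{eq:same-space} (each $\partial\hat{\sigma}_i$ for $i<\low(j)$ supported on indices $< \low(j)$, with its own leading index $\low(i)\neq\low(\low(j))$) must be combined carefully — but the argument is otherwise routine coefficient-chasing.
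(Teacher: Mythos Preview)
Your overall plan --- assume column $\low(j)$ is nonzero, apply $\partial$ to \eqref{eq:boundary-op-hat}, and derive a contradiction by reading off a nonzero coefficient in $0=\partial^2\hat{\sigma}_j$ --- is exactly the paper's approach. The gap is in the ``clean way to finish'': the index you single out, $\low(\low(j))$, is the wrong one, and the step you flag as the main obstacle is not just a matter of care but actually fails as stated.

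Concretely, Fact~\ref{fact:distinct} tells you that for $i<\low(j)$ with column $i$ nonzero, $\low(i)\neq \low(\low(j))$; that is, the \emph{leading} entry of column $i$ is not in row $\low(\low(j))$. It says nothing about the \emph{sub-leading} entries $\tilde{B}_{\low(\low(j)),i}$ for $\low(\low(j))<i<\low(j)$, and these can perfectly well be nonzero. So other terms $\tilde{B}_{ij}\,\partial\hat{\sigma}_i$ in the sum may contribute to the $\hat{\sigma}_{\low(\low(j))}$-coefficient, and your displayed product $\tilde{B}_{\low(j),j}\cdot\tilde{B}_{\low(\low(j)),\low(j)}$ need not be the full coefficient. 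The fix, which is what the paper does, is to set $I=\{\,i\le \low(j):\tilde{B}_{ij}\neq 0,\ \partial\hat{\sigma}_i\neq 0\,\}$ (nonempty since $\low(j)\in I$) and look at the coefficient of $\hat{\sigma}_{\low(i_0)}$ where $i_0$ is the \emph{unique} element of $I$ attaining $\max_{i\in I}\low(i)$ (uniqueness by Fact~\ref{fact:distinct}). For every other $i\in I$ one has $\low(i)<\low(i_0)$, so $\partial\hat{\sigma}_i$ is supported on indices $\le \low(i)<\low(i_0)$ and cannot touch $\hat{\sigma}_{\low(i_0)}$; hence that coefficient is $\tilde{B}_{i_0,j}\,\tilde{B}_{\low(i_0),i_0}\neq 0$, the desired contradiction. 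Your argument becomes correct once $\low(\low(j))$ is replaced by this maximal $\low(i_0)$.
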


  We prove the claim by contradiction. We assume that
  the $\low(j)$-th column of $\tilde{B}$ is non-zero.
  By applying $\partial$ to \eqref{eq:boundary-op-hat}, we have
  \begin{equation}\label{eq:boundary-0}
    \begin{aligned}
      0 &= \tilde{B}_{L(j), j} \partial \hat{\sigma}_{L(j)} +
      \sum_{1 \leq i < L(j)} \tilde{B}_{ij} \partial \hat{\sigma}_i.
    \end{aligned}
  \end{equation}
  Let $I$ be
  \begin{equation}
    I = \{i \mid 1 \leq i \leq L(j), \tilde{B}_{ij} \not = 0 \text{ and }
    \partial \hat{\sigma}_i \not = 0\}.
  \end{equation}

  Clearly, from \eqref{eq:boundary-0}, we have
  \begin{equation}\label{eq:boundary-nonzero}
    \begin{aligned}
      0 =& \sum_{i \in I} \tilde{B}_{ij} \partial \hat{\sigma}_i 
      = \sum_{i \in I} \tilde{B}_{ij}\left(
        \tilde{B}_{L(i), i} \hat{\sigma}_{L(i)}
        + \sum_{1 \leq k < L(i)} \tilde{B}_{ki} \hat{\sigma}_{k}
      \right).
    \end{aligned}  
  \end{equation}
  From the assumption that the $\low(j)$-th column of $\tilde{B}$ is non-zero,
  $\partial \hat{\sigma}_{L(j)}$ is non-zero and we have $j \in I$, so
  $I$ is non-empty.
  Now we consider $L(I) = \{L(i) \mid i \in I\}$.
  When we consider the maximum of $L(I)$,
  the index $i_0$ attaining the maximum is unique due to Fact~\ref{fact:distinct}.
  Therefore,
  in \eqref{eq:boundary-nonzero},
  a term of $\hat{\sigma}_{L(i_0)}$ appears only once and the coefficient of
  the term is $\tilde{B}_{i_0, j}\tilde{B}_{L(i_0),i_0}$. This value is non-zero
  because of the definition of $I$ and this contradicts \eqref{eq:boundary-nonzero}.

  We define $D_n, D_n'$, and $E_n$ as follows:
  \begin{equation}
    \begin{aligned}
      D_n =& \{ 1 \leq j < n  \mid \text{the $j$th column of $\tilde{B}$ is non-zero}\}, \\
      D_n' =& \{L(j) \mid j \in D_n\}, \\
      E_n =& \{1 \leq j < n \mid \text{the $j$th column of $\tilde{B}$ is zero and
        $L(i) \not = j$ for any $i \in D_n$ }\}.
    \end{aligned}  
  \end{equation}
  From the above claim, $D_n$ and $D_n'$ have no common element and hence
  $D_n, D_n',$ and $E_n$ are the decomposition of indices $\{1, \ldots, n - 1\}$.
  The map $L$ from $D$ to $D'$ is bijective because of Fact~\ref{fact:distinct}.
  Chains $\{\tilde{\sigma}_1, \ldots, \tilde{\sigma}_{n-1} \}$ are defined as follows:
  \begin{equation}\label{eq:def-tilde-sigma}
    \tilde{\sigma}_k = \left\{
      \begin{array}{ll}
        \hat{\sigma}_k
        & \text{if $k \in D_n \sqcup E_n$}, \\
        \tilde{B}_{kj} \hat{\sigma}_{k} +
        \sum_{1 \leq i < k} \tilde{B}_{ij} \hat{\sigma}_i
        & \text{if $k \in D_n'$ and $j = L^{-1}(k)$ }.p
      \end{array}
    \right. 
  \end{equation}
  The set of chains $\{\tilde{\sigma}_1, \ldots, \tilde{\sigma}_{n-1}\}$ satisfies
  condition (a) in Lemma~\ref{lem:outerloop}
  due to \eqref{eq:same-space} and \eqref{eq:def-tilde-sigma}.
  It is also straightforward to show conditions (b), (c), and (d) from the construction of
  the decomposition and the chains.

  It is easy to prove the conditions in \eqref{eq:reminder} from the above discussion.
  When condition (B) in the algorithm is true for $j = n$, 
  we can show Fact~\ref{fact:distinct} for $ 1 \leq i, j \leq n$
  since (INNERLOOP) already terminates for $j = n$.

\end{proof}

\section{Algorithm implementation}\label{sec:implementation}
The judgement algorithm is implemented in HomCloud\footnote{\url{https://www.wpi-aimr.tohoku.ac.jp/hiraoka_labo/homcloud/index.en.html}}. 
The twist algorithm introduced by
\cite{Chen11persistenthomology} is used for faster
computations. The program correctly judges the existence of the torsion
for pointclouds shown in Fig.~\ref{fig:doubletripleloop} (a) and (e).

\subsection{Performance benchmark}

In this section, we explore the performance of Algorithm~\ref{alg:allfree}.
We compare the program implemented in HomCloud
and Phat~\cite{phat}\footnote{\url{https://bitbucket.org/phat-code/phat/}}.
The Phat code is straightforward and efficient.
The input filtration for the performance comparison is 
an alpha filtration constructed from random 5000 and 50000 points in $\R^3$.
Five trials were undertaken and the average computation time is shown.
In Phat, we use twist-algorithm with \verb|bit_tree_pivot_column|, as recommended by
\cite{phat}. The benchmark is executed on a PC with a 1.5 GHz Intel(R) Core(TM) i7-8500Y CPU, 16 GB of memory, and the Debian 10.0 operating system.
Both programs run on a single core.
Results are shown in Table~\ref{tab:performance}.

\begin{table}[htbp]
  \centering
  \begin{tabular}{c|c|c} \hline
    & 5000 points & 50000 points \\ \hline
    Phat &  0.0282 sec & 0.446 sec \\ \hline
    HomCloud & 0.0323 sec & 0.550 sec \\ \hline
  \end{tabular}
  \caption{Performance benchmark results}
  \label{tab:performance}
\end{table}

According to the benchmark, our new program is c.~$\times$1.20 slower than Phat.
Phat uses $\Zint_2$ as a coefficient field and implements
fast arithmetic operations by using bit-wise operations. The technique likely renders Phat faster and we conclude that the performance of 
our program is roughly as efficient as Phat.

\section{Probability of torsion appearance}\label{sec:probability}
Here we measured the probability of the appearance of torsions
of random filtrations by a numerical experiment.
Corollary~\ref{cor:M_0} and Corollary~\ref{cor:M_munus_1} already ensure
the independence of persistence diagrams from $\Bbbk$
for a filtration embedded in $\R^2$. Therefore, we started from filtrations
in $\R^3$.

We generated a random filtration from a pointcloud sampled from 
a Poisson point process in $[0, 1]^3$.
The average number of points is 1000. 
Thus, a random number $k$ is sampled from the Poisson
distribution whose parameter is 1000 and $k$ points are uniformly randomly
sampled in $[0, 1]^3$. An alpha filtration was computed
from the generated pointcloud and the condition was judged by 
HomCloud. Here, 10000 trials were carried out.
Only one filtration had non-trivial torsion; thus, 9999 filtrations had trivial torsion\footnote{
 Run-time errors occurred two times in these 10000 trials.
  When an error occurred, we disposed of the input
  data and retried random sampling. The cause of the errors is probably
  the violation of the general position condition of
  the randomly generated pointcloud.
}. In sum, it can be stated that a filtration with non-trivial torsion is possible, but very rare.
This numerical experiment suggests that there is some mathematical
mechanism explaining why a random filtration with non-trivial torsion is quite
rare. Exploring this further here is beyond the scope of the current paper.

In contrast \cite{doi:10.1080/10586458.2018.1473821} experimentally showed that torsion subgroups often
appeared in random $d$-complex $Y \sim Y_d(n, p)$, introduced by \cite{Linial*2006}. 
We apply our algorithm to random filtrations used in that paper. Let $\bar{Y}(n)$ be a simplex on $n$ vertices and let $Y_0$
be the $(d-1)$-skeleton of $\bar{Y}(n)$.
$Y_k$ for $k=1, \ldots, m$ is randomly generated by adding a $d$-simplex to $Y_{k-1}$. The $d$-simplex
is uniformly randomly sampled from all $d$-simplices in $\bar{Y}(n) \backslash Y_{k-1}$. We apply the algorithm to the
filtration
$Y_0 \subset Y_1 \subset \cdots \subset Y_m$. We used $d=2, n=75, m=5000$. The number of random flirtations
was 10000. In the experiment we found that all 10000 random filtrations have non-trivial torsion.

The above two experiments are contrasting. We expect that the difference emanates  from the
dimension of the space. In the first experiment, a filtration is embedded in $\R^3$ and
in the second experiment $\bar{Y}(75)$ can be embedded in $\R^{74}$. The experiments
suggest that a random filtration embedded in a higher dimensional space has more non-trivial torsion subgroups
in the relative homology groups than a filtration embedded in a lower dimensional space.

To further investigate the relationship between the dimension of the space and the non-trivial torsion, we numerically experimented on random Vietoris-Rips filtrations. Vietoris-Rips filtrations were used since it is difficult to construct an alpha filtration of a pointcloud in a high-dimensional space. In one trial of the experiment, 1000 points are uniformly randomly sampled in $\R^n$
and we judged the existence or non-existence of non-trivial torsion subgroup in the 1st persistent homology of the Vietoris-Rips filtration of the pointcloud using our algorithm. To reduce the cost of computation we take the threshold of maximum edge length. The threshold is statistically determined to averagely include 166500 edges (1/6 edges of all edges of 999-simplex) in the filtration. We determined the threshold rule by considering the limitation of our computer resource. We performed 1000 trials for each $n$ and counted frequencies for the non-trivial torsion subgroup. 

Table \ref{tab:dim_freq} and Fig.~\ref{fig:dim_freq} show the frequencies for $n = 3, 4, \ldots, 40$. We also experimented with larger thresholds to examine the validity of the threshold rule and the results were consistent with the 1/6 rule experiments. The results show that the frequency appears to monotonically increase with $n$ and the speed of increase becomes slower as $n$ increases.   

\begin{table}[hbtp]
    \centering
    \begin{tabular}{c|c|c|c|c|c|c|c|c|c|c} \hline
        $n$ & & 2 & 3 & 4 & 5 & 6 & 7 & 8 & 9 & 10 \\ \hline
        frequency & & 0 &  0 &  4&  9& 17& 14& 25& 25& 25 \\ \hline \hline
        & 11 & 12 & 13 & 14 & 15 & 16 & 17 & 18 & 19 & 20  \\ \hline
        & 29 & 36 & 31 & 30 & 43 & 36 & 32 & 37 & 32 & 30 \\ \hline \hline
        & 21 & 22 & 23 & 24 & 25 & 26 & 27 & 28 & 29 & 30  \\ \hline
        & 44 & 47 & 42 & 53 & 39 & 46 & 46 & 47 & 42 & 35 \\ \hline \hline
        & 31 & 32 & 33 & 34 & 35 & 36 & 37 & 38 & 39 & 40  \\ \hline
        & 47 & 45 & 43 & 45 & 49 & 50 & 44 & 30 & 39 & 58 \\ \hline 
    \end{tabular}
   \caption{dimension $n$ vs frequency}\label{tab:dim_freq}
    \label{tab:my_label}
\end{table}

\begin{figure}[hbtp]
    \centering
    \includegraphics[width=0.5\hsize]{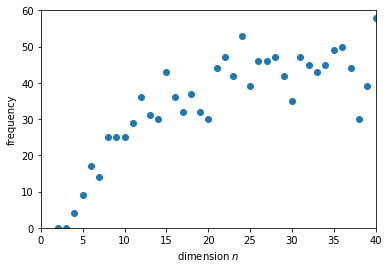}
    \caption{Scatter plot of Table \ref{tab:dim_freq}}
    \label{fig:dim_freq}
\end{figure}

Overall, the experiment also suggests that we do not need to 
be concerned about the coefficient field in most cases if the space is $\R^3$. Thus, if there are concerns about the field choice problem in future research contexts, our proposed algorithm would be helpful.

\section{Proof of Theorem~\ref{thm:r-wasserstein}}\label{sec:convex}

In this section, we assume the following conditions.
\begin{cond}
There exists $0 = r_0 < r_1 < \ldots < r_N < r_{N+1} = \infty$ such that
$r_k \leq r < r_{k+1}$ implies $X_r = X_{r_k}$.
\end{cond}
This means that the filtration is assumed to be right-continuous.
This condition is not essential and we can prove the theorem if
the filtration is left-continuous. We assume right-continuous for expository purposes.
From this condition and $H_q( \cup_t X_t) = 0$, all birth-death pairs can be
written as $(r_k, r_\ell)$ for $0 \leq k < \ell \leq N$.
Hence, using persistent Betti numbers, we have the following equation:
\begin{equation}
    \begin{aligned}
    & \sum_{(b, d) \in D_q(\X; \R)} f(d - b) \\
    =& \sum_{0 \leq k < \ell \leq N}f(r_\ell - r_k)\left(
    \beta_{r_k}^{r_{\ell-1}}(\Bbbk) - \beta_{r_{k-1}}^{r_{\ell-1}}(\Bbbk)
    - \beta_{r_k}^{r_{\ell}}(\Bbbk) + \beta_{r_{k-1}}^{r_{\ell}}(\Bbbk)
    \right) \\
    =& \sum_{0 \leq k < \ell \leq N} \beta_{r_{k}}^{r_{\ell}}(\Bbbk)(
    f(r_{\ell+1} - r_{k}) - f(r_{\ell+1} - r_{k+1})
    - f(r_{\ell} - r_{k}) + f(r_{\ell} - r_{k+1}) ).
    \end{aligned}\label{eq:f_sum}
\end{equation}

Now we prove the following inequality:
\begin{equation}
f(r_{\ell+1} - r_{k}) - f(r_{\ell+1} - r_{k+1})
    - f(r_{\ell} - r_{k}) + f(r_{\ell} - r_{k+1}) ) \geq 0. \label{eq:f_convex}
\end{equation}
In addition, if $f$ is strictly convex, the left-hand side is
strictly positive.
First we prove \eqref{eq:f_convex} under the condition of
$r_{\ell+1} - r_{k+1} \geq r_{\ell} - r_{k}$. In this case,
\begin{equation}
    \begin{aligned}
    & f(r_{\ell+1} - r_{k}) - f(r_{\ell+1} - r_{k+1})
    - f(r_{\ell} - r_{k}) + f(r_{\ell} - r_{k+1}) \\
    = & \left( f(r_{\ell+1} - r_{k}) - f(r_{\ell+1} - r_{k+1}) \right)
    - \left( f(r_{\ell} - r_{k}) - f(r_{\ell} - r_{k+1}) \right) \\
    =& f'(\zeta_1) (r_{k+1} - r_{k}) - f'(\zeta_2)(r_{k+1} - r_{k}) \\
    =& (r_{k+1} - r_{k})(f'(\zeta_1) - f'(\zeta_2)),
    \end{aligned}
\end{equation}
where $r_{\ell+1} - r_{k+1} \leq \zeta_1 \leq r_{\ell+1} - r_{k}$ and
$r_{\ell} - r_{k+1} \leq \zeta_2 \leq r_{\ell} - r_{k}$. Here, 
from the assumption of $r_{\ell+1} - r_{k+1} \geq r_{\ell} - r_{k}$ and
the convexity of $f$, we have $f'(\zeta_1) - f'(\zeta_2) \geq 0$ and
the inequality \eqref{eq:f_convex}. The strict positivity from
strict convexity is trivial. When $r_{\ell+1} - r_{k+1} \leq r_{\ell} - r_{k}$,
we can prove the inequality in a similar way by exchanging the role
of $f(r_{\ell+1} - r_{k+1})$ and $f(r_{\ell} - r_{k})$ in the foregoing.

From the discussion in Section~\ref{sec:proof-allfree}, we have
\begin{equation}
\beta_{r_{k}}^{r_{\ell}}(\R) \geq \beta_{r_{k}}^{r_{\ell}}(\Zint_p),
\label{eq:beta_ineq_1}
\end{equation}
for any $p$, $k$, and $\ell$. Furthermore,
 if $D_q(\X; \R) \not = D_q(\X; \Zint_p)$, there exists $k < \ell$ such that
\begin{equation}
\beta_{r_{k}}^{r_{\ell}}(\R) > \beta_{r_{k}}^{r_{\ell}}(\Zint_p)    
\label{eq:beta_ineq_2}
\end{equation}
holds. From \eqref{eq:f_sum}, \eqref{eq:f_convex}, \eqref{eq:beta_ineq_1}, 
and \eqref{eq:beta_ineq_2}, we complete the proof of the theorem.

\section{Conclusions}\label{sec:conclusion}

In this paper, we focus on mathematical phenomena concerning the change of
the coefficient field in persistent homology. We show that the torsion
of relative homology groups $H_q(X_n, X_m; \Zint)$ plays an essential role
for the phenomena. We also propose an algorithm to judge the independence
of the field change. The algorithm is implemented in software, HomCloud.

Using the algorithm, we conduct experiments which suggest that the probability of persistence
diagrams changing as a result of field changes is not zero, but very low 
for random pointclouds in $\R^3$. Thus, we do not need to be particularly concerned about
the choice of the field in most practical persistent homology contexts
if we approach persistence diagrams in statistical terms. To assuage researchers' future concerns about this issue, the torsion condition can be checked by the algorithm.

Of course, where torsion structures are important, such as 
Klein bottles or M\"obius strip, the choice of the coefficient field is 
important. Based on the results of the numerical experiment on $\bar{Y}(75)$ and Vietoris-Rips filtrations,
we also suggest that the choice of the coefficient field is 
important for high dimensional data. 
In such contexts, further study is required into the torsion
on the filtrations.

Further, the results herein suggest that the
``difficulty'' of computation of $D_q(\X; \Bbbk)$ depends on the torsion.
If all torsions are zero, $D_q(\X; \Bbbk)$ for any $\Bbbk$ is computable
by computing $D_q(\X; \Bbbk)$ for only one $\Bbbk$, for example, $\Zint_2$.
If not, to compute $D_q(\X; \Bbbk)$ for many $\Bbbk$ is more onerous.
In that case, we can apply the algorithm in \cite{BJDMC} for faster computation;
however, that algorithm simultaneously computes $D_q(\X; \Bbbk)$ for multiple, but
not for all, $\Bbbk$. This phenomenon is not dissimilar to a theorem in \cite{optimal-Day}. Those authors proved that
the difficulty of computing a kind of
optimization problem on homology algebra depends on the existence of the non-zero torsion
subgroup of the relative homology group.
Integer programming on homology algebra can be solved by linear programming
if the torsion-free condition holds. Integer programming requires much more time
than linear programming in the sense of computational complexity theory.
Of course, our paper and \cite{optimal-Day} concern different problems,
but the results are similar because of the shared focus on the
torsions of relative homology. These results suggest that
the existence of non-trivial torsion subgroups in relative homology
renders the problems of
computational homology more difficult.


\appendix

\section{Algorithm~\ref{alg:pd}}\label{sec:proof-alg1}

Here we prove the structural theorem of persistent homology and
show why Algorithm~\ref{alg:pd} yields the correct decomposition.
Notation and facts from Section~\ref{sec:ph} are used as necessary.

\begin{theoremext}\label{thmext:pdalg}
  Let $\Bbbk$ be a field and 
  assume that a simplicial filtration
  $\X = \emptyset = X_0 \subset \cdots \subset X_N$
  satisfies Condition~\ref{cond:finite}. Then the persistent homology
  \begin{equation}
    H(\X; \Bbbk) = H(X_0; \Bbbk) \to \cdots \to H(X_N; \Bbbk)
  \end{equation}
  has a unique interval decomposition as follows:
  \begin{equation}\label{eq:interval-decomposition}
    \begin{aligned}
      H(\X; \Bbbk) & \simeq \bigoplus_{i=1}^L I(b_m, d_m), \\
      I(b, d) &= 0 \to \cdots \to 0 \to \overset{b}{\check{\Bbbk}}
      \xrightarrow{1} \cdots 
      \xrightarrow{1} \overset{d - 1}{\check{\Bbbk}}
      \to \overset{d}{\check{0}} \to \cdots \to 0, \\
      I(b, \infty) &= 0 \to \cdots \to 0 \to \overset{b}{\check{\Bbbk}}
      \xrightarrow{1} \cdots \xrightarrow{1} \Bbbk,
    \end{aligned}
  \end{equation}
  where $H(X_k; \Bbbk) = \bigoplus_{q=0}^{\dim(\X)} H_q(X_k; \Bbbk)$
  is a direct sum of $q$th PDs,
  $\dim(\X)$ is the maximum dimension of the simplices in $\X$,
  $H(X_k; \Bbbk) \to H(X_{k+1}; \Bbbk)$ is the homology homomorphism 
  induced by the inclusion map $X_k \hookrightarrow X_{k+1}$,
  $b_m \in \{1, \ldots, N\}$, $d_m \in \{2, \ldots, N\} \cup \{\infty\}$,
  $b_m < d_m$,
  and
  $\Bbbk \xrightarrow{1} \Bbbk$ is the identity map on $\Bbbk$.
  Moreover Algorithm~\ref{alg:pd} gives the PD
  as shown in \eqref{eq:pd}.
\end{theoremext}

We use the notation in Notation~\ref{notation:allfree} with $R=\Bbbk$.
To start the proof, we consider the meaning of an
interval indecomposable $I(b, d)$.
\begin{itemize}
\item A new homology generator is born at $H(X_{b}; \Bbbk)$. This means that
  a cycle exists which satisfies
  $z \in Z(X_b; \Bbbk) \backslash Z(X_{b-1}; \Bbbk)$.
\item The homology generator persists until $H(X_{d-1}; \Bbbk)$. This means that
  $[z]_k \in H(X_k; \Bbbk)$ is non-zero for any $b \leq k < d$.
\item The homology generator dies at $d$. This means that
  $[z]_d = 0 \in H(X_d; \Bbbk)$, which is equivalent to
  $z \in B(X_d; \Bbbk)$.
\end{itemize}

Indeed, we can find chains
$\{\tilde{\sigma}_1, \ldots, \tilde{\sigma}_N \}$, and a decomposition of
indices $\{1, \ldots, N\} = D \sqcup D' \sqcup E$ satisfying the following
conditions, where $\hat{B}$ is the matrix returned by Algorithm~\ref{alg:pd}.
\begin{cond}\label{cond:phbasis} \ 
  \begin{enumerate}[(a)]
  \item $\{\tilde{\sigma}_1,\ldots, \tilde{\sigma}_k\}$ is a basis of $C(X_k)$
  \item $\partial \tilde{\sigma}_j \not = 0$ for $j \in D$
  \item $L_{\hat{B}}$ is a bijection from $D$ to $D'$ and
    $\partial \tilde{\sigma}_j = \tilde{\sigma}_{L_{\hat{B}}(j)}$ for any $j \in D$
  \item $\partial \tilde{\sigma}_i = 0$ for $i \in D' \sqcup E$
  \end{enumerate}
\end{cond}
From Condition \ref{cond:phbasis}, we have the following facts.
\begin{itemize}
\item $G_k = \{ [\tilde{\sigma}_i]_k \mid  1 \leq i \leq k, i \in D' \sqcup E,
  i \not = L_{\hat{B}}(j) \text{ for any } j \in D \cap \{1, \ldots, k\}\}$
  is a basis of $H(X_k)$
\item $H(X_{i - 1}) \oplus \left< [\tilde{\sigma}_{i}]_i\right> = H(X_{i})$
  for $i \in D' \sqcup E$
\item For any $i \in D', j \in D$ with $i = L_{\hat{B}}(j)$, the following
  holds.
  \begin{itemize}
  \item $\tilde{\sigma}_i \in Z(X_{i}) \backslash  Z(X_{i-1})$
  \item $[\tilde{\sigma}_{i}]_k \not = 0$ for any $i \leq k < j$
  \item $[\tilde{\sigma}_{i}]_k \in G_k$ for any $i \leq k < j$
  \item $[\tilde{\sigma}_{i}]_j = 0$
  \end{itemize}
  This corresponds to the interval indecomposable
  \begin{equation}
    I(i, j) = 0 \to \cdots \to 0 \to \overset{i}{\check{\Bbbk}}
    \xrightarrow{1} \cdots 
    \xrightarrow{1} \overset{j - 1}{\check{\Bbbk}}
    \to \overset{j}{\check{0}} \to \cdots \to 0,
  \end{equation}
  and a birth-death pair $(i, j)$
\item For $i \in E$, the following holds
  \begin{itemize}
  \item $\tilde{\sigma}_i \in Z(X_{i}) \backslash  Z(X_{i-1})$
  \item $[\tilde{\sigma}_{i}]_k \not = 0$ for any $i \leq k$
  \item $[\tilde{\sigma}_{i}]_k \in G_k$ for any $i \leq k$
  \end{itemize}
  This corresponds to the interval indecomposable
  \begin{equation}
    I(i, \infty) = 0 \to \cdots \to 0 \to \overset{i}{\check{\Bbbk}}
    \xrightarrow{1} \cdots \xrightarrow{1} \Bbbk,
  \end{equation}
  and a birth-death pair $(i, \infty)$
\end{itemize}
From these facts, it is straightforward that $D(\X)$ is given by
\begin{equation}
  D(\X) = \{ (L_{\hat{B}}(j), j) \mid j \in D \} \cup \{ (i, \infty) \mid i \in E\}.
\end{equation}

To prove the theorem, we show the following three facts.
\begin{fact}\label{fact:algpd-finite-steps}
  Algorithm~\ref{alg:pd} always terminates in finite steps.
\end{fact}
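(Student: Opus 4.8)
The plan is to exploit the two-level loop structure of Algorithm~\ref{alg:pd}. The outer \texttt{for}-loop runs over $j = 1, \ldots, N$ and hence executes exactly $N$ times, so it suffices to show that for each fixed $j$ the inner \texttt{while}-loop terminates after finitely many iterations. For this I would introduce the quantity $L_B(j)$ as a monovariant, using the total order $-\infty < 1 < 2 < \cdots < N$ on its range $\{-\infty\} \cup \{1, \ldots, N\}$, and show that every execution of the loop body strictly decreases it.

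The key point is the following computation. Suppose the loop condition holds, so there is $i < j$ with $L_B(i) = L_B(j) = \ell \neq -\infty$. By the definition of $L_B$, all entries $B_{r,i}$ and $B_{r,j}$ with $r > \ell$ are zero, while $B_{\ell,i} \neq 0$ and $B_{\ell,j} \neq 0$. The algorithm forms $s = B_{\ell,j}/B_{\ell,i}$ (well defined, since $B_{\ell,i} \neq 0$) and replaces column $j$ by $(\text{column }j) - s\,(\text{column }i)$. After this operation the entry in row $\ell$ of column $j$ becomes $B_{\ell,j} - s\,B_{\ell,i} = 0$, and the entries in rows $r > \ell$ stay zero because they were zero in both columns. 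Hence the new value of $L_B(j)$ is strictly less than $\ell$ in the above order (either some index $< \ell$, or $-\infty$ if column $j$ has become zero). Observe also that this operation modifies only column $j$, so $L_B(i')$ for $i' \neq j$ is unchanged throughout the inner loop, and the comparison in the loop condition keeps making sense.

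Since $L_B(j)$ lives in a set of size $N+1$ and strictly decreases at each iteration, the inner \texttt{while}-loop runs at most $N$ times for each $j$; consequently the whole algorithm performs at most $N^2$ column operations together with $O(N^2)$ pivot comparisons, and therefore terminates in finitely many steps. I do not expect a genuine obstacle here: the only point that needs care is the verification that the reduction step truly lowers the pivot, i.e. that the entries in rows below row $\ell$ (rows of larger index) vanish in \emph{both} columns $i$ and $j$ — which is exactly what the hypothesis $L_B(i) = L_B(j)$ of the loop condition supplies.
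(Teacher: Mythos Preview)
Your argument is correct and is essentially the same as the paper's: both prove termination by observing that each iteration of the inner \texttt{while}-loop strictly decreases $L_B(j)$ in the finite ordered set $\{-\infty,1,\ldots,N\}$, so the inner loop halts after at most $N$ steps. You have simply spelled out the verification that the reduction step kills the pivot row and leaves the rows below it zero, which the paper leaves implicit.
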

\begin{fact}\label{fact:algpd-basis}
  Algorithm~\ref{alg:pd} gives the chains
  $\{\tilde{\sigma}_1, \ldots, \tilde{\sigma}_N \}$ and a decomposition of
  indices $\{1, \ldots, N\} = D \sqcup D' \sqcup E$
  satisfying (a), (b), (c), and (d) in Condition~\ref{cond:phbasis}.
\end{fact}
\begin{fact}\label{fact:algpd-unique}
  The decomposition \eqref{eq:interval-decomposition} is unique
  if the decomposition exists.
\end{fact}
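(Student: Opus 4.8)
The plan is to establish uniqueness through the rank invariant: I will show that \emph{every} interval decomposition of $H(\X;\Bbbk)$ yields the same persistent Betti numbers $\beta_m^n(\Bbbk)$ --- namely the ranks of the intrinsic structure maps $\phi_m^n$ --- and then recover the multiplicity of each birth--death pair from those numbers by the inclusion--exclusion (M\"obius inversion) identities \eqref{eq:multiplicity-1} and \eqref{eq:multiplicity-2}. Once the multiplicities are forced, the multiset $\{(b_m,d_m)\}$ must be the same for any two decompositions, which is exactly Fact~\ref{fact:algpd-unique}.

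First I would fix an arbitrary interval decomposition $H(\X;\Bbbk)\simeq\bigoplus_{i=1}^{L} I(b_i,d_i)$ as in \eqref{eq:interval-decomposition}. An isomorphism of persistence modules intertwines the structure map $\phi_m^n\colon H(X_m;\Bbbk)\to H(X_n;\Bbbk)$ with the direct sum over $i$ of the structure maps of the summands $I(b_i,d_i)$. For a single summand $I(b,d)$ that map is the identity $\Bbbk\to\Bbbk$ when $b\le m\le n<d$ and is $0$ otherwise, so by additivity of rank under direct sums,
\[
  \rank\bigl(H(X_m;\Bbbk)\to H(X_n;\Bbbk)\bigr)=\#\{\, i \mid b_i\le m\le n<d_i \,\}.
\]
This is precisely \eqref{eq:persistence-betti-number} read off from this particular decomposition; restricting attention to a single homological degree reproduces $\beta_m^n(\Bbbk)$ in the same way.

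The left-hand side of the displayed identity is the rank of a map built only from the persistence module, with no reference to any decomposition, so the numbers $\beta_m^n(\Bbbk)$ for $0\le m\le n\le N$ depend only on the isomorphism type of $H(\X;\Bbbk)$. Feeding them into \eqref{eq:multiplicity-1} for a finite death time, and into \eqref{eq:multiplicity-2} for $d=\infty$ (with the conventions $\beta_m^n=0$ whenever $m<0$, $n>N$, or $m>n$), determines the multiplicity of every pair $(b,d)$ in $\{(b_i,d_i)\}_{i=1}^L$. Hence the multiset, and therefore $D(\X)$ and each $D_q(\X)$, is independent of the chosen decomposition; combined with Fact~\ref{fact:algpd-basis} this completes the proof of Theorem~\ref{thmext:pdalg}.

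The only step needing a little care is that a decomposition of the full module $H(\X;\Bbbk)=\bigoplus_q H_q(\X;\Bbbk)$ is not \emph{a priori} compatible with the grading. However, the rank computation above applies verbatim to such a decomposition, so the ungraded uniqueness statement needs nothing more; and if the graded refinement is wanted, one observes that the projection of $H(\X;\Bbbk)$ onto its degree-$q$ part is an endomorphism of the persistence module, hence acts on each indecomposable summand $I(b,d)$ as an idempotent of $\mathrm{End}\bigl(I(b,d)\bigr)\cong\Bbbk$, i.e.\ as $0$ or $1$, forcing every summand to be homogeneous. I do not expect a genuine obstacle here: apart from this homogeneity remark, the argument is just additivity of rank together with the purely combinatorial inversion of \eqref{eq:multiplicity-1}--\eqref{eq:multiplicity-2}.
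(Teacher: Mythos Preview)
Your argument is correct and is essentially the paper's own elementary proof: use that any interval decomposition computes the same persistent Betti numbers $\beta_m^n(\Bbbk)=\rank\phi_m^n$ via \eqref{eq:persistence-betti-number}, and then recover the multiplicities from \eqref{eq:multiplicity-1}--\eqref{eq:multiplicity-2}. The paper additionally notes that this also follows from Krull--Schmidt, and your extra remark on homogeneity of summands with respect to the $q$-grading is a nice clarification that the paper leaves implicit.
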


The proof of Fact~\ref{fact:algpd-finite-steps} is very similar to that of  Fact~\ref{fact:algfree-finite-steps}.
To show Fact~\ref{fact:algpd-basis} we apply
Lemma~\ref{lem:outerloop} for $n=N+1$ by replacing $\Zint$ with $\Bbbk$
and Algorithm~\ref{alg:allfree} with Algorithm~\ref{alg:pd}.
Fact~\ref{fact:algpd-unique} is a consequence of the Krull-Schmidt theorem. Here we show a more elementary proof by using 
persistent Betti numbers. From \eqref{eq:persistence-betti-number},
\eqref{eq:multiplicity-1}, and \eqref{eq:multiplicity-2}, we can show that
the multiplicity of each birth-death pair is unique if
the decomposition exists since the persistent Betti numbers only depend
on the rank of the maps $H(X_m; \Bbbk) \to H(X_n; \Bbbk)$ for all $m \leq n$
and the ranks themselves are independent of the interval decomposition.


\bibliographystyle{spmpsci}
\bibliography{references}

\end{document}